\documentclass[submission,copyright,creativecommons]{eptcs}
\providecommand{\event}{AiML 2026} 

\usepackage{iftex}

\ifpdf
  \usepackage{underscore}         
  \usepackage[T1]{fontenc}        
\else
  \usepackage{breakurl}           
\fi

\usepackage{amsmath, amssymb, amscd, mathrsfs, amsthm}
\usepackage{comment}
\usepackage{tikz}

\newtheorem{theorem}{Theorem}[section]
\newtheorem{cor}[theorem]{Corollary}

\newtheorem{prop}[theorem]{Proposition}
\newtheorem*{mytheorem}{Theorem}

\theoremstyle{definition}
\newtheorem{definition}{Definition}
\newtheorem{example}{Example}
\newtheorem{remark}{Remark}

\newcommand{\alg}[1]{{{\textbf{\upshape #1}}}}

\newcommand{\VV}{\mathbf{V}}
\newcommand{\QQ}{\mathbf{Q}}
\newcommand{\PR}{\mathbf{P}}

\newcommand{\SU}{\pmb{S}}
\newcommand{\II}{\pmb{I}}
\newcommand{\HH}{\pmb{H}}
\newcommand{\PP}{\pmb{P}}
\newcommand{\PPu}{\pmb{P}_{\!\!u}}

\newcommand{\UUk}{\pmb{U}_{\!\!\kappa}}

\newcommand{\vv}[1]{\mathsf {#1}}

\newcommand{\vV}{\vv V}
\newcommand{\vK}{\vv K}

\newcommand{\pVk}{\WsV(\kk)}

\newcommand{\ee}{\epsilon}

\newcommand{\bb}{\mathfrak{b}}

\newcommand{\FrV}{\alg F_\vv V(\omega)}

\newcommand{\HA}{\mathsf{HA}}

\newcommand{\Def}[1]{\textit{#1}}

\newcommand{\app}{\approx}

\newcommand{\set}[2]{\{#1 \mid #2\}}

\newcommand{\vt}{\vartheta}

\newcommand{\WsV}{\mathfrak{b}_\vv V}

\newcommand{\seq}[2]{#1 \Ra  {#2}}

\newcommand{\GG}{\Gamma}	
\newcommand{\gm}{\gamma}

\newcommand{\kk}{\kappa}
\newcommand{\lm}{\lambda}

\newcommand{\one}{\mathbf{1}}
\newcommand{\zero}{\mathbf{0}}

\newcommand{\Ra}{\Rightarrow}

\newcommand{\sq}{\mathfrak{s}}
\newcommand{\Sq}{\mathfrak{S}}
\newcommand{\csq}{\mathfrak{c}}

\newcommand{\Vk}{\vv V^\kk}
\newcommand{\VP}{\vv V^p}

\title{On Strong Structural Completeness of Varieties and Quasivarieties}
\author{Alex Citkin
\institute{Metropolitan Telecommunications\\ New York, USA}
\email{acitkin@gmail.com}
}

\def\titlerunning{On Strong Structural Completeness}
\def\authorrunning{Alex Citkin}

\begin{document}
\maketitle

\begin{abstract}
We study structural completeness in the infinitary sense (strong structural completeness) in an algebraic setting. A variety is structurally complete (SCpl) if it is generated, as a quasivariety, by its free algebras, and it is strongly structurally complete (SSCpl) if it is generated, as a prevariety, by its free algebras. A quasivariety is SSCpl if it is generated, as a prevariety, by its free algebras.

We prove that every quasivariety of finite type with the CEP that is generated by finite algebras and contains an infinite irreducible algebra is not SSCpl. Moreover, every congruence meet-semidistributive variety of finite type generated by finite algebras is SSCpl if and only if it is tabular. Thus, Dummett’s and Medvedev’s logics are SCpl but not SSCpl.

A variety is primitive if it is SCpl and all its subvarieties are SCpl; it is strongly primitive if it is SSCpl and all its subvarieties are SSCpl. We prove that in primitive congruence-distributive varieties of finite type, the tabular subvarieties, and only those, are strongly primitive. This observation also yields a criterion for strong primitivity.
\\

\noindent\textbf{Keywords:}
structural completeness,
structural completeness in the infinitary sense,
varieties of algebras,
quasivarieties,
prevarieties,
primitive varieties

\end{abstract}

\section{Introduction}
 
Propositional logic, understood as a consequence relation, can be defined in two ways: syntactically, by a deductive system—a pair consisting of a set of axioms and a set of structural inference rules—and semantically, by a class of models that, for each valuation (assignment), validates the conclusion whenever all the premises are validated. Since derivations are finite sequences of formulas, every deductive system defines a finitary structural consequence relation, whose equivalent algebraic semantics is a quasivariety (see \cite{BlokPigozziAlgebraizable1989}). Moreover, any finitary structural consequence relation can be axiomatized by a deductive system (see \cite{LosSuszkoRemarks1958}). In addition, it was proved there that if the defining matrix is finite, then the consequence relation (or operator) determined by it is finitary.

However, in general, consequence relations determined by an infinite algebra or by a class of algebras need not be finitary: even the simplest infinite Heyting algebra of order type $\omega + 1$ determines a non-finitary consequence relation. Note that this consequence relation is also determined by the class of all finite linearly ordered Heyting algebras; thus, infinite classes of finite algebras may define a non-finitary consequence relation.

Not surprisingly, any finite set of similar finite algebras of finite type always determines a finitary consequence relation (see Section~\ref{sec-SSCpl}, where we give an algebraic proof of a generalization of the theorem from \cite[Section~8]{LosSuszkoRemarks1958}, stating that a consequence relation defined by a finite matrix is finitary).

On the other hand, in Section~\ref{sec-pr}, we prove that any infinite set of pairwise nonisomorphic finite algebras from a congruence semi-distributive variety always defines a non-finitary consequence relation. This justifies the interest in studying infinitary consequence relations.

The algebraic counterparts of finitary consequence relations are quasivarieties, that is, classes of algebras that can be defined by a set of quasiequations. A \Def{quasiequation} is a $\kk$-quasiequation of the form $\ee_1, \dots, \ee_n \Ra \ee$, where $\ee, \ee_i$ ($1 \leq i \leq n$) are equations. Thus, every quasiequation contains only finitely many (possibly none) premises and therefore only finitely many variables.

At the same time, the algebraic counterparts of non-finitary consequence relations are prevarieties (also known as implicative classes \cite{ShafaatImplicationally1969} or generalized quasivarieties \cite{MoraschiniLogical2018}; cf.\ also \cite{BlokJonsson1999}), that is, classes of algebras defined by $\kk$-quasiequations of the form $E \Ra \ee$, where $E$ is a (possibly empty or infinite) set of equations. If $E$ is infinite, there are two important parameters associated with such a $\kk$-quasiequation: $\lm$—the cardinality of $E$—and $\kk$—the cardinality of the set of variables occurring in the $\kk$-quasiequation.

For any regular cardinal $\lm > 0$ and any infinite regular cardinal $\kk$, we define a $(\lm,\kk)$-quasiequation as a $\kk$-quasiequation $E \Ra \ee$ such that $|E| < \lm$ and $E$ contains fewer than $\kk$ distinct variables (see \cite[Section~2.3.4]{GorbunovBookE}). By a $\kk$-\Def{quasiequation} we mean a $(\kk,\kk)$-quasiequation. Thus, $(1,\kk)$-quasiequations are equations, and $\omega$-quasiequations are the usual (finitary) quasiequations.

Prevarieties that can be defined by a set of $\kk$-quasiequations will be called $\kk$-\Def{quasivarieties}. By $\QQ_\kk(\vv K)$ we denote the smallest $\kk$-quasivariety containing the class of similar algebras $\vv K$. Thus, $\QQ_\omega(\vv K) = \QQ(\vv K)$ is the quasivariety generated by $\vv K$.
\smallskip

The notion of structural completeness was introduced by W.~Pogorzelski in his seminal 1971 paper \cite{PogorzelskiStructural1971}. It was defined for consequence operators determined by a deductive system $\langle R, A \rangle$, where $R$ is a finite set of (structural finitary) rules and $A$ is a set of propositional formulas. A rule $r$ is \Def{admissible} for the operator if the operator determined by the extended deductive system $\langle R \cup {r}, A \rangle$ has the same set of theorems as the original operator, and $r$ is \Def{derivable} if the deductive system $\langle R \cup {r}, A \rangle$ determines the same operator. The operator is said to be \Def{structurally complete} if every rule admissible for it is derivable. In the algebraic setting, a variety $\vv V$ is structurally complete if it is generated as a quasivariety by its free algebras, that is, if $\vv V = \QQ(\FrV)$.

We say that a variety $\vv V$ is $\kk$-\Def{structurally complete} if $\vv V = \QQ_\kk(\FrV)$ (in Section~\ref{sec-pr} we justify why it is sufficient to consider free algebras of countable rank; cf.\ Theorem~\ref{th-freesame}). Obviously, $\omega$-structural completeness coincides with ordinary structural completeness.
\smallskip

The goal of this paper is to demonstrate that for all $\kk > \omega$, the property of $\kk$-structural completeness is both very strong and rare. Thus, on the one hand, for varieties generated by a finite algebra of finite type, $\kappa$-structural completeness coincides with structural completeness for all infinite $\kappa$ (Theorem~\ref{th-tabular1}). On the other hand, the following holds:

\begin{mytheorem} $\mathbf{\ref{th-fatab}}$\
	Suppose that $\vv V$ is a structurally complete congruence distributive variety of finite type and $\kappa > \omega$. Then $\vv V$ is $\kappa$-structurally complete if and only if it is generated by a finite algebra.
\end{mytheorem}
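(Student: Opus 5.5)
Both directions are handled separately. The direction ``generated by a finite algebra $\Rightarrow$ $\kappa$-structurally complete'' follows from the earlier result announced as Theorem~\ref{th-tabular1}. For a variety $\vv V=\VV(\alg A)$ with $\alg A$ finite of finite type, $\QQ_\kk(\vv K)=\QQ(\vv K)$ for every class $\vv K\subseteq\vv V$. The reason is that any $\kk$-quasiequation $E\Ra\ee$ valid in $\vv K$ is already equivalent, over the locally finite variety $\vv V$, to a finitary quasiequation. The variables of $\ee$ are finitely many, and the finitely generated free algebra of $\vv V$ on those variables is finite, so only finitely many equations in them are distinct modulo $\vv V$. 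A compactness-type argument for finite algebras then reduces $E$ to a finite subset; this is the algebraic Łoś--Suszko argument mentioned in Section~\ref{sec-SSCpl}. Hence structural completeness of $\vv V$ gives $\vv V=\QQ(\FrV)=\QQ_\kk(\FrV)$.

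For the converse, I assume $\vv V$ is congruence distributive, of finite type, structurally complete and $\kk$-structurally complete, and aim to show that $\vv V$ is generated by a finite algebra. First, structural completeness gives $\vv V=\QQ(\FrV)$. Since the free algebras of $\vv V$ are residually approximated by subdirectly irreducibles in $\vv V$, I want to use local finiteness. I would first argue that $\vv V$ is locally finite, or at least generated by its finite members, using the finite-type assumption and the paper's earlier reduction (Section~\ref{sec-pr}) to countably generated free algebras. Then I would invoke the Section~\ref{sec-pr} result: an infinite set of pairwise nonisomorphic finite (subdirectly irreducible) algebras in a congruence (meet-)semidistributive variety defines a non-finitary consequence. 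Concretely, suppose $\vv V$ is not finitely generated. By Jónsson's lemma and finite type, there are then infinitely many pairwise nonisomorphic finite subdirectly irreducibles $\alg A_1,\alg A_2,\dots$ in $\vv V$ of unbounded size. Using Baker--Pixley style definable principal congruences, or Jónsson terms, I would write down an infinitary quasiequation $E\Ra\ee$ in countably many variables. It would say, roughly, that the variables $x_i$ are pairwise distinct modulo a fixed pair $(y,z)$ and that $y\ne z$ forces an unbounded chain/antichain. The premises would encode ``infinitely many elements separated by the monolith,'' and the design goal is that $E\Ra\ee$ holds in every finite subdirectly irreducible but fails in some infinite member of $\vv V$.

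Next I would show that $E\Ra\ee$ holds in $\FrV$. Free algebras are residually finite in the locally finite case: each homomorphism from $\FrV$ into a finite subdirectly irreducible has finite image, so the premises cannot all hold nontrivially there, and the conclusion transfers because $\FrV$ embeds in a product of such images. On the other hand, an ultraproduct of the $\alg A_i$ lies in $\vv V$ and violates $E\Ra\ee$. This contradicts $\vv V=\QQ_\kk(\FrV)$, since the rule holds in $\FrV$ and hence in every member of $\QQ_\kk(\FrV)$.

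The main obstacle is building a single $\omega_1$-quasiequation that is uniform across infinitely many finite subdirectly irreducibles and is refuted in $\vv V$. This is exactly where congruence distributivity (meet-semidistributivity) and finite type must enter. I would first try to reuse the separating rule constructed in Section~\ref{sec-pr} for the non-finitarity theorem, rather than building one from scratch.
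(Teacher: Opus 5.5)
The harder direction (``$\kk$-structurally complete $\Rightarrow$ generated by a finite algebra'') has two genuine gaps.

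First, you propose to \emph{derive} that $\vv V$ is generated by its finite members from structural completeness, congruence distributivity and finite type. Nothing supports this: Theorem~\ref{th-freesame} only concerns the rank of free algebras. The paper does not derive it either. The version of Theorem~\ref{th-fatab} proved in Section~\ref{sec-SSCpl} assumes it explicitly, and the introduction drops it. Your J\'onsson-lemma step needs this hypothesis. More essentially, it is what makes $\FrV$ residually finite (Proposition~\ref{pr-fainfsi}).

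Second, the step you yourself call the main obstacle is left open, and the tools you suggest do not deliver it. Congruence distributivity does not make principal congruences definable; that is EDPC, where the bounding quasiequation $\WsV(\kk)$ of Theorem~\ref{th-infchar} does play this role. Nor is an ultraproduct of finite SI algebras guaranteed to be SI, so it need not refute an SI-based rule.

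The missing argument is the proof of Theorem~\ref{th-fmp}. By Kearnes--Willard (Corollary~\ref{cor-KW}), a congruence meet-semidistributive variety of finite type with arbitrarily large finite SI members contains an infinite SI algebra $\alg A$. Take a countably infinite SI subalgebra $\alg B$ of $\alg A$. The paper obtains it from Proposition~\ref{pr-CEPirr}, i.e.\ via the CEP; without the CEP, close a countably infinite set containing a monolith pair $(a,b)$ under the operations and under Mal'cev-chain witnesses for $(a,b)\in\vt_{\alg A}(c,d)$. The characteristic quasiequation $\csq_{\alg B}$ is an $\omega^+$-quasiequation. By Proposition~\ref{pr-char} it holds in every member of $\vv V$ into which $\alg B$ does not embed. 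In particular it holds in every finite member, hence in $\FrV\in\II\SU\PP(\text{finite members of }\vv V)$, while $\alg B\not\models\csq_{\alg B}$. So $\alg B\in\vv V\setminus\QQ_{\omega^+}(\FrV)$, and $\vv V$ is not $\kk$-SCpl for any $\kk>\omega$. Thus the uniform rule you were trying to build by hand is simply the characteristic quasiequation of an infinite SI algebra; the real work is producing that algebra.

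Your easy direction reaches the right conclusion, but through a lemma that is not correct as stated. The {\L}o\'s--Suszko compactness argument proves $\PR(\vv K)=\QQ(\vv K)$ when $\vv K$ is a finite set of finite algebras (Theorem~\ref{th-tabular1}). It does not apply to $\{\FrV\}$, which is infinite. Counting equations in the finitely many variables of $\ee$ does not help, since $E$ may contain infinitely many other variables.

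In fact ``$\QQ_\kk(\vv K)=\QQ(\vv K)$ for every $\vv K\subseteq\VV(\alg A)$'' is false in general. Suppose $\VV(\alg A)$ has an infinite SI member $\alg C$, as happens for some finitely generated varieties that are not congruence distributive, and let $\vv K$ be the class of its finite members. Then $\alg C\in\QQ(\vv K)$ by local finiteness. But $\csq_{\alg C}$ holds in every finite algebra, so $\alg C\notin\QQ_\kk(\vv K)$ for $\kk>|\alg C|$.

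The fix is to use structural completeness \emph{before} compactness:
\begin{enumerate}
\item Apply Proposition~\ref{pr-qirr}(b) to $\vv V=\QQ(\{\alg F_{\vv V}(k):k<\omega\})$. Each of the finitely many SI factors of $\alg A$ embeds into a finite $\alg F_{\vv V}(k)$, so $\alg A\in\II\SU\PP(\alg F_{\vv V}(m))$ for some $m$.
\item Every $\alg F_{\vv V}(k)$ lies in $\II\SU\PP(\alg A)$, so $\vv V=\QQ(\alg F_{\vv V}(m))$.
\item By Theorem~\ref{th-tabular1}, $\QQ(\alg F_{\vv V}(m))=\PR(\alg F_{\vv V}(m))\subseteq\PR(\FrV)$.
\end{enumerate}
This is Corollary~\ref{cor-tab}.
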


Thus, even the variety of Heyting algebras $\vv{LC}$ generated by all linearly ordered Heyting algebras, which is structurally complete (see, e.g., \cite{DzikWronski1973}), is not $\kappa$-structurally complete for any $\kappa > \omega$. The same holds for the variety $\vv{ML}$ corresponding to Medvedev logic, which is structurally complete (see \cite{PrucnalTwo1979}).

In Section~\ref{sec-primitive}, we consider hereditarily $\kk$-structurally complete varieties—the $\kk$-primitive varieties. In \cite{Citkin1978}, it was observed that the Heyting algebra $\alg C_{\omega+1}$ of order type $\omega + 1$ defines a non-finitary consequence relation: the prevariety defined by this algebra is not a quasivariety, because the following infinite rule is valid in $\alg C_{\omega+1}$, whereas it can be refuted in the algebra $\alg C_{\omega+2}$ of order type $\omega + 2$:
\begin{align*}
	\mathfrak{r} = \frac{\neg\neg p_0,\ (p_j \to p_i) \to p_j,\ \ 0 \leq i < j \leq \omega}{p_\omega},
\end{align*}
whereas $\QQ(\alg C_{\omega+1}) = \QQ(\alg C_{\omega+2})$.

Note that the rule
\begin{align*}
	\bb = \frac{(p_i \leftrightarrow p_j) \to p_\omega,\ \ 0 \leq i < j < \omega}{p_\omega}
\end{align*}
introduced independently in \cite{PrucnalStructural1985} and \cite{RautenbergNote1985}, and also playing an important role in \cite{WojtylakSyntactical1990, WojtylakStructural1991}, separates aforementioned algebras. Moreover, for every $\kk > \omega$, a primitive variety of Heyting algebras $\vv V$ is $\kk$-primitive if and only if the rule $\mathfrak{r}$ (or $\bb$) is valid in $\vv V$.

\section{Preliminaries}\label{sec-pr}

Throughout the paper, we use an arbitrary but fixed signature $\Omega$ (not containing the symbol $\Ra$); all algebras and terms are assumed to be of this type. Terms are built in the usual way from a set of variables and symbols from $\Omega$, and $Eq$ denotes the set of all equations.

If $\GG \subseteq Eq$ and $\gm \in Eq$, the pair $\GG \Ra \gm$ is called a \Def{sequent}, or an \Def{implication} (e.g., \cite{ShafaatImplicationally1969}), a \Def{generalized quasiequation} (e.g., \cite{MoraschiniLogical2018}), or a \Def{consecution} (e.g., \cite{LavickaNogueraNew2017}). If $\GG$ is empty, we omit the antecedent, and the sequent reduces to an equation. If $\GG$ is finite, the sequent is a \Def{quasiequation}. If $\lm$ and $\kk$ are cardinals such that $|\GG| < \lm$ and the sequent $\GG \Ra \gm$ contains fewer than $\kk$ distinct variables, then it is called a $(\lm,\kk)$-\Def{quasiequation}; if $\lm = \kk$, it is called a $\kk$-\Def{quasiequation}.

If $\alg A$ is an algebra, a map from the set of all variables into $\alg A$ is called a \Def{valuation} in $\alg A$. Clearly, any valuation $\nu$ in $\alg A$ can be extended to a map from the set of all terms into $\alg A$: for a term $t(x_1,\dots,x_n)$, we define
\[
\nu\bigl(t(x_1,\dots,x_n)\bigr) = t\bigl(\nu(x_1),\dots,\nu(x_n)\bigr).
\]
We say that $\nu$ \Def{validates} the equation $t \app s$ (in symbols, $\alg A \models_\nu t \app s$) if $\nu(t) = \nu(s)$. If $\GG$ is a set of equations, we write $\alg A \models_\nu \GG$ to indicate that $\nu$ validates every equation in $\GG$. Throughout the paper, we assume that $\kk \ge \omega$ is a regular cardinal.

Given an algebra $\alg A$ and a $\kk$-quasiequation $\GG \Ra \gm$, we say that this $\kk$-quasiequation is \Def{valid} in $\alg A$ (in symbols, $\alg A \models \GG \Ra \gm$) if, for every valuation $\nu$ in $\alg A$,
\begin{align*}
	\alg A \models_\nu \GG \ \text{implies} \ \alg A \models_\nu \gm.
\end{align*}
A $\kk$-quasiequation $\sq$ is \Def{valid in a class of algebras} $\vv K$ if it is valid in every algebra from $\vv K$. A set of $\kk$-quasiequations $\Sq$ is valid in $\vv K$ if every $\kk$-quasiequation in $\Sq$ is valid in $\vv K$. Two sets of $\kk$-quasiequations that are valid in exactly the same algebras from $\vv K$ are called $\vv K$-\Def{equivalent}.

Observe that, since a $\kk$-quasiequation contains fewer than $\kk$ distinct variables, a $\kk$-quasiequation $\sq$ is valid in an algebra $\alg A$ if and only if it is valid in every $\lm$-generated subalgebra of $\alg A$ for all $\lm < \kk$.
\smallskip

Let $\vv K$ be a class of algebras. As usual, $\II$, $\HH$, $\SU$, $\PP$, and $\PP_u$ denote the class operators: $\II(\vv K)$, $\HH(\vv K)$, $\SU(\vv K)$, $\PP(\vv K)$, and $\PP_u(\vv K)$ are, respectively, the classes of all isomorphic copies, homomorphic images, subalgebras, Cartesian products (including the product over the empty set), and ultraproducts of algebras from $\vv K$. Classes of algebras closed under $\HH$, $\SU$, and $\PP$ are called \Def{varieties}; classes closed under $\II$, $\SU$, $\PP$, and $\PP_u$ are called \Def{quasivarieties}; and classes closed under $\II$, $\SU$, and $\PP$ are called \Def{prevarieties}.

It is well known that every variety can be defined by a set of equations, and every set of equations defines a variety. Similarly, every quasivariety can be defined by a set of quasiequations, and every set of quasiequations defines a quasivariety. A class of algebras defined by a set of $\kk$-quasiequations is called a $\kk$-\Def{quasivariety}.

Given a class of algebras $\vv K$, we use $\VV(\vv K)$, $\QQ(\vv K)$, and $\PR(\vv K)$ to denote, respectively, the smallest variety, quasivariety, and prevariety containing $\vv K$; we say that these classes are \Def{generated} by $\vv K$. It is well known that
\[
\VV(\vv K)=\HH\SU\PP(\vv K), \qquad
\QQ(\vv K)=\II\SU\PP\PP_u(\vv K), \qquad
\PR(\vv K)=\II\SU\PP(\vv K).
\]

A class of algebras $\vv K$ is called $\kk$-\Def{local} if, for every algebra $\alg A$, whenever every $\lm$-generated subalgebra of $\alg A$ (for all $\lm < \kk$) belongs to $\vv K$, it follows that $\alg A \in \vv K$. Clearly, $\kk$-quasivarieties are $\kk$-local classes (cf.\ \cite[Section~2.3.4]{GorbunovBookE}).

\medskip

Let us introduce an additional class operator (similar to the one in \cite{BlokJonsson1999}): for any cardinal $\kappa \ge \omega$ and any class of algebras $\vv K$, let $\UUk(\vv K)$ denote the class of all algebras whose every $\lm$-generated subalgebra, for $\lm < \kk$, belongs to $\vv K$.

Given a class of algebras $\vv K$ and an infinite cardinal $\kk$, denote by $\QQ_\kk(\vv K)$ the smallest $\kk$-quasivariety containing $\vv K$. Then
\[
\QQ_\kk(\vv K) = \UUk,\II\SU\PP(\vv K).
\]
Note that $\QQ_\kk(\vv K)$ is always a prevariety, although $\QQ_\kk(\vv K)$ and $\PR(\vv K)$ may be distinct. It is also clear that for any $\omega \leq \kk_1 \leq \kk_2$,
\[
\QQ_{\kk_2}(\vv K) \subseteq \QQ_{\kk_1}(\vv K) \subseteq \QQ_\omega(\vv K) = \QQ(\vv K).
\]

The following notion will play an important role in what follows. An algebra $\alg A \in \vv K$ is \Def{weakly projective} in $\vv K$ (or weakly $\vv K$-projective) if $\alg A$ embeds into each of its homomorphic preimages in $\vv K$; that is, whenever $\alg A \in \HH(\alg B)$ for some $\alg B \in \vv K$, then $\alg A \in \II\SU(\alg B)$. For example, every free algebra in a variety $\vv V$ is weakly $\vv V$-projective (indeed, projective; see, e.g., \cite{GraetzerB}).

\paragraph{Congruences and irreducubility.}

\begin{definition}
	Let $\alg A$ be a nontrivial algebra. Denote by $\mu_{\alg A}$ the meet of all congruences on $\alg A$ distinct from the identity congruence, and call it the \Def{monolith of} $\alg A$. Algebra $\alg A$ is said to be \Def{subdirectly irreducible (SI)} if $\mu_{\alg A}$ is not the identity congruence.
\end{definition}

\begin{definition}
	Let $\vv K$ be a class of algebras and $\alg A \in \vv K$.
	Congruence $\vt$ on $\alg A$ is a \Def{$\vv K$}-congruence provided $\alg A/\vt \in \vv K$.
	Denote by $\mu_\vv K(\alg A)$ the meet of all $\vv K$-congruences on $\alg A$ distinct from the identity congruence, and call it the $\vv K-$\Def{monolith of} $\alg A$. Algebra $\alg A$ is said to be \Def{$\vv K$-irreducible} if $\mu_\vv K(\alg A)$ is not the identity congruence. We will omit the reference to the class if no confusion arises. 
\end{definition}
Clearly, if  $\vv K$ is a variety, then $\vv K$-irreducible algebras are precisely the subdirectly irreducible (SI) algebras.

As usual, if $a,b \in \alg A$, we denote by $\vt_{\alg A}(a,b)$ the principal congruence on $\alg A$ generated by the pair $(a,b)$, and we omit the reference to $\alg A$ when it is clear from the context.

\begin{prop}\label{pr-qirr} 
	Let $\vv Q$ be a quasivariety. Then the following hold:
	\begin{enumerate}
		\item[(a)] Every nontrivial algebra in $\vv Q$ is a subdirect product of $\vv Q$-irreducible algebras;
		
		\item[(b)] If $\vv Q$ has finite type and $\vv Q = \QQ(\vv K)$, where $\vv K$ is a class of algebras, then every finite $\vv Q$-irreducible algebra belongs to $\II\SU(\vv K)$;
		
		\item[(c)] If $\vv Q = \QQ(\alg A)$ and $\alg A$ is finite, then all $\vv Q$-irreducible algebras belong to $\II\SU(\alg A)$ and are therefore finite.
	\end{enumerate} 
\end{prop}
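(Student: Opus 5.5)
For (a), the plan is a Birkhoff-style argument using the $\vv Q$-congruences of $\alg A\in\vv Q$. These are exactly the congruences $\vt$ with $\alg A/\vt\in\vv Q$. Because $\vv Q$ is closed under $\SU\PP$, they form a meet-closed family. Because $\vv Q$ is closed under $\PP_u$ (equivalently, is axiomatized by finitary quasiequations), the union of a chain of $\vv Q$-congruences is again a $\vv Q$-congruence: a quasiequation that fails in $\alg A/\bigcup\vt_i$ fails via finitely many pairs, and these already lie in some $\vt_i$. So for any $a\neq b$ in $\alg A$, Zorn's lemma gives a $\vv Q$-congruence $\vt_{ab}$ that is maximal among those not containing $(a,b)$. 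In $\alg A/\vt_{ab}$ every nonidentity $\vv Q$-congruence corresponds to a larger $\vv Q$-congruence on $\alg A$ (by the correspondence theorem and the isomorphism theorem) and therefore contains the class pair of $(a,b)$. Hence $\mu_{\vv Q}(\alg A/\vt_{ab})\neq$ identity, i.e. $\alg A/\vt_{ab}$ is $\vv Q$-irreducible. Since $\bigcap_{a\neq b}\vt_{ab}=\Delta$, $\alg A$ is a subdirect product of these quotients. Nontriviality of $\alg A$ guarantees that some pair $a\neq b$ exists.

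For (b), let $\alg B\in\vv Q=\II\SU\PP\PP_u(\vv K)$ be finite and $\vv Q$-irreducible, and pick $(a,b)\in\mu_{\vv Q}(\alg B)$ with $a\neq b$. Then $\alg B$ embeds into a product $\prod_i \alg C_i$ with $\alg C_i\in\PP_u(\vv K)$. The kernels $\vt_i$ of the projections restricted to $\alg B$ are $\vv Q$-congruences, since $\alg B/\vt_i\in\SU(\alg C_i)\subseteq\vv Q$, and their meet is the identity. Consequently some $\vt_i$ is the identity: otherwise every $\vt_i$ contains $(a,b)$, and so does their meet. Thus $\alg B\in\II\SU\PP_u(\vv K)$. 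It remains to see that a finite algebra of finite type embedding into an ultraproduct of members of $\vv K$ embeds into some member of $\vv K$. Its positive and negative diagram is a single first-order sentence (an existential formula with $|\alg B|$ variables and finitely many atomic and negated atomic conjuncts, which is finite because the type is finite). By Łoś's theorem this sentence holds in ultrafilter-many factors, and any such factor then contains an isomorphic copy of $\alg B$. So $\alg B\in\II\SU(\vv K)$.

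For (c), let $\vv K=\{\alg A\}$ with $\alg A$ finite. Every ultrapower of a finite algebra is isomorphic to it, so $\PP_u(\alg A)=\II(\alg A)$, and $\vv Q=\II\SU\PP(\alg A)$ with no finite-type assumption needed. Repeating the argument of (b) for a $\vv Q$-irreducible $\alg B$ (now of arbitrary size) shows that some projection kernel is the identity, so $\alg B\in\II\SU(\alg A)$, and in particular $\alg B$ is finite.

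The step I expect to need the most care is the union-of-chains claim in (a); it relies on finitary axiomatizability of quasivarieties and is the reason the statement is made for quasivarieties rather than arbitrary prevarieties.
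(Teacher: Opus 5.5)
Your proposal is correct and follows essentially the same route as the paper. The paper only cites the relative subdirect representation theorem for (a), Gorbunov's Proposition~2.1.15 for (b), and for (c) the Czelakowski--Dziobiak lemma that $\QQ(\vv K)$-irreducibles lie in $\II\SU\PPu(\vv K)$ together with $\PPu(\alg A)=\II(\alg A)$; you supply the standard self-contained arguments behind these citations, and in (c) you merely collapse the ultrapowers first and then use irreducibility to drop the product, rather than the reverse.
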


For (a), see, e.g., \cite[Theorem 3.1.1]{GorbunovBookE}; (b) follows from \cite[Proposition 2.1.15]{GorbunovBookE}; and (c) follows from the observation that, by \cite[Lemma 1.5]{CzelakowskiDziobiakCongruence1990}, for any irreducible algebra $\alg B \in \QQ(\alg A)$, we have $\alg B \in \II\SU\PPu(\alg A)$, and $\II\SU\PPu(\alg A) = \II\SU(\alg A)$ because $\alg A$ is finite.

\begin{cor}\label{cor-countSI}
	If $\vv P$ is a proper subprevariety of a quasivariety $\vv Q$, then $\vv Q \setminus \vv P$ contains a countable $\vv Q$-irreducible algebra.
\end{cor}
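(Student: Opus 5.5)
Since $\vv P$ is a proper subprevariety of $\vv Q$, there is an algebra $\alg A \in \vv Q \setminus \vv P$. The plan has three steps: reduce to a countable algebra outside $\vv P$, decompose it by Proposition~\ref{pr-qirr}(a) into $\vv Q$-irreducible factors, and use closure of $\vv P$ under $\II\SU\PP$ to conclude that some factor lies outside $\vv P$. The main difficulty is the first step, the reduction to a countable algebra.

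\textbf{Reduction to a countable algebra.} Every $\alg A \in \vv Q$ is a subdirect product of $\vv Q$-irreducibles by Proposition~\ref{pr-qirr}(a). Since $\vv P$ is closed under $\II\SU\PP$, if all these irreducible factors were in $\vv P$, then $\alg A$ would be too. Hence some $\vv Q$-irreducible algebra $\alg B$ lies in $\vv Q \setminus \vv P$. The factor $\alg B$ need not be countable, so decomposing $\alg A$ directly does not finish the argument. Instead I would begin from a countable witness.

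\textbf{Existence of a countable witness.} I would argue that $\vv Q \setminus \vv P$ contains a countably generated algebra. Write $\vv P = \PR(\vv P)$; we want a countable $\alg C \in \vv Q \setminus \vv P$. Here I would use the free algebra $\alg F_{\vv Q}(\omega)$ of $\vv Q$ of countable rank, together with the fact that a quasivariety is generated by its countably generated members. Concretely, $\vv Q = \QQ(\alg F_{\vv Q}(\omega))$, and $\QQ = \UU_{\omega}\II\SU\PP$, so $\vv Q$ is $\omega$-local.

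Suppose every finitely generated member of $\vv Q$ belonged to $\vv P$. That alone is not enough, because $\vv P$ need not be local. So instead I would take the finitely or countably generated free algebra and relativize: let $\vv P_c$ denote the countable members of $\vv P$. I claim that if all countable members of $\vv Q$ lie in $\vv P$, then $\vv Q \subseteq \vv P$.

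\textbf{Proving the claim.} Every $\alg A \in \vv Q$ belongs to $\II\SU\PP\PPu(\vv K)$, where $\vv K$ is the class of countable members of $\vv Q$. Equivalently, $\alg A$ is a subdirect product of $\vv Q$-irreducibles, and each $\vv Q$-irreducible $\alg B$ is in $\II\SU\PPu$ of its finitely generated subalgebras. This uses the Czelakowski--Dziobiak lemma cited for (c): an irreducible algebra in $\QQ(\vv K)$ lies in $\II\SU\PPu(\vv K)$. That still involves ultraproducts, which $\vv P$ is not closed under.

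A cleaner route is the free algebra. Each $\alg A \in \vv Q$ embeds into a product of algebras $\alg A/\theta$, one for each pair $a \neq b$, where $\theta$ is a $\vv Q$-congruence maximal with respect to separating $a$ and $b$. These are exactly the $\vv Q$-irreducibles. So it suffices to show that each $\vv Q$-irreducible lies in $\vv P$, and this is precisely the content of the statement, so the reasoning is circular.

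\textbf{Direct approach, which I would pursue.} Take $\alg B \in \vv Q \setminus \vv P$ that is $\vv Q$-irreducible, and use a Löwenheim--Skolem style argument. Choose a countable elementary substructure $\alg B_0 \preceq \alg B$ containing a pair $(a,b)$ that generates the $\vv Q$-monolith. Elementarity preserves membership in $\vv Q$, since $\vv Q$ is axiomatized by first-order quasiequations. I expect it also preserves $\vv Q$-irreducibility, since the monolith is witnessed by a countable configuration. The remaining point, $\alg B_0 \notin \vv P$, is the hard part, because $\vv P$ is not first-order. I would try to choose $\alg B_0$ so that $\alg B \in \II\SU\PP(\alg B_0)$, for example via $\alg B \hookrightarrow \alg B_0^I / U$. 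That gives only an ultrapower, so I would instead exploit that $\alg B$ embeds into a power of its countable elementary substructures that separate points. This last embedding is the main obstacle.
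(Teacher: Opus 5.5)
\textbf{There is a genuine gap, and it cannot be closed.} Your first step (decompose an algebra of $\vv Q\setminus\vv P$ into $\vv Q$-irreducibles by Proposition~\ref{pr-qirr}(a), then use that $\vv P$ is closed under $\II\SU\PP$) is exactly the paper's proof. It is also all that proof establishes: a $\vv Q$-irreducible algebra in $\vv Q\setminus\vv P$. The paper never addresses the word ``countable''. You correctly located the difficulty there. However, no argument can supply it, because the countability claim is false in this generality. In fact the statement holds for a given $\vv Q$ exactly when all $\vv Q$-irreducible algebras are countable; the `if' direction is the paper's argument.

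For `only if', let $\alg B$ be an uncountable $\vv Q$-irreducible algebra and put $\vv P=\PR(\vv K)$, where $\vv K$ is the class of countable members of $\vv Q$. Suppose $\alg B\le\prod_i\alg K_i$ with $\alg K_i\in\vv K$. Pick a coordinate $\pi_i$ separating a pair $c\neq d$ in $\mu_{\vv Q}(\alg B)$. The kernel of $\pi_i$ on $\alg B$ is a $\vv Q$-congruence not containing $(c,d)$, hence it is the identity. So $\alg B$ embeds into the countable $\alg K_i$, which is absurd. Thus $\vv P\subsetneq\vv Q$, yet $\vv Q\setminus\vv P$ contains no countable algebra at all. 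For a concrete instance, take $\vv Q=\HA$ and $\alg B$ the Heyting chain of order type $\omega^{+}+2$. Alternatively, let $\vv P$ be the class of Heyting algebras validating $\mathfrak b_{\HA}(\omega^{+})$; by Theorem~\ref{th-infchar} it contains every countable Heyting algebra and omits every uncountable SI one.

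This example refutes two of your intermediate steps:
\begin{itemize}
\item your claim that $\vv Q\subseteq\vv P$ whenever all countable members of $\vv Q$ lie in $\vv P$;
\item the L\"owenheim--Skolem step, which can never produce $\alg B_0\notin\vv P$, since every countable algebra lies in $\vv P$.
\end{itemize}
Separately, $\vv Q=\QQ(\alg F_{\vv Q}(\omega))$ is false in general; it says precisely that $\vv Q$ is structurally complete. What is true is that $\vv Q$ is generated by its finitely generated members.

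\textbf{What can actually be proved.} Either drop ``countable'', in which case your first paragraph suffices, or add hypotheses that bound the size of a refutation. Suppose $\vv Q$ is a variety of finite type with the CEP and $\vv P$ is an $\omega^{+}$-quasivariety. Take an SI $\alg B\in\vv Q\setminus\vv P$, an $\omega^{+}$-quasiequation $\sq$ valid in $\vv P$, and a valuation $\nu$ refuting $\sq$ in $\alg B$. Let $\alg C$ be the subalgebra generated by the countably many values of $\nu$ together with a pair from the monolith. Then $\alg C$ is countable, SI by Proposition~\ref{pr-CEPirr}, and still refutes $\sq$, so $\alg C\notin\vv P$; this is the argument of Corollary~\ref{cor-CEPref}. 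Without some bound on the number of variables in the sequents defining $\vv P$, no argument of this kind can succeed.
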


\begin{proof}
	Let $\vv{P} \subsetneq \vv Q$ be a proper subprevariety. Recall that every algebra in $\vv{Q}$ is a subdirect product of $\vv{Q}$-irreducible algebras. Since the prevariety $\vv{P}$ is closed under subdirect products and $\vv{P} \subsetneq \vv{Q}$, the difference $\vv{Q}\setminus \vv{P}$ must contain a $\vv{Q}$-irreducible algebra.
\end{proof}

A variety for which every SI member has cardinality less than $m$, for some $m < \omega$, is said to have a \Def{finite residual bound} (see, e.g., \cite[Section~4.3]{McKenzieetBookv2}). Recall the following property of congruence meet-semidistributive varieties.

\begin{theorem}[{\cite[Theorem~4.1]{KearnesWillardResidually1999}}]
	Suppose that $\vV$ is a congruence meet-semidistributive variety of finite type. If $\vV$ contains arbitrarily large finite SI algebras, then $\vV$ contains an infinite SI algebra.
\end{theorem}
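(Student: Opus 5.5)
This statement is quoted from Kearnes and Willard, and the paper cites it without proof. So I will not re-derive it. I will outline how their argument goes, to the extent I recall it. The route is to convert "arbitrarily large finite SI algebras" into an unbounded supply of finite SI algebras with uniformly controlled "witnesses" of subdirect irreducibility. Then a compactness (ultraproduct) argument yields an infinite SI algebra.

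The first step uses congruence meet-semidistributivity. In $\mathrm{SD}_\wedge$ varieties, Kearnes and Willard show that the monolith of an SI algebra can be "reached" in a uniform way. Given any pair $(a,b)$ with $a\neq b$ and any pair $(c,d)$ in the monolith, $(c,d)\in\vt(a,b)$ is witnessed by a Maltsev chain of bounded length. The chain is built from unary polynomials of bounded complexity. Finite type is essential here: only finitely many basic operations appear in these polynomials. Together with meet-semidistributivity (via the characterization by Willard terms), this gives a uniform bound on the length of the principal-congruence witnesses. In other words, there is a finitely many-variable first-order formula $\Phi(x,y,u,v)$ such that in every SI $\alg A\in\vV$ we have $(u,v)\in\vt(x,y)$ whenever $\Phi(x,y,u,v)$ holds. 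Moreover, $\alg A$ satisfies
\[
\exists u\,v\,\bigl(u\neq v \wedge \forall x\,y\,(x\neq y \to \Phi(x,y,u,v))\bigr).
\]
This sentence is first order and characterizes SI algebras within $\vV$. I expect this to be the main obstacle: one has to show that definability of principal congruences by a bounded formula holds uniformly on SI members. I would follow Willard's approach via $\mathrm{SD}_\wedge$ terms and a bound on the "critical" polynomials.

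Once that is in place, the remaining step is routine. Take finite SI algebras $\alg A_n\in\vV$ with $|\alg A_n|\geq n$, and a nonprincipal ultraproduct $\alg A=\prod \alg A_n/U$. Then $\alg A\in\vV$ because varieties are closed under $\PPu$. The sentence above holds in every $\alg A_n$, so by Łoś's theorem it holds in $\alg A$, and hence $\alg A$ is SI. Finally, $\alg A$ is infinite, since for each $n$ the sentence "there are at least $n$ elements" holds in almost all $\alg A_n$.
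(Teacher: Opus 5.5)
The paper does not prove this statement; it imports it from Kearnes and Willard. So there is no internal proof to compare routes with. Your outline, however, has a genuine gap: the lemma in your first step is false in general, and it is not a result of Kearnes and Willard.

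You want one formula $\Phi$ with bounded witnesses such that every SI member of $\vV$ satisfies $\exists u\,v\,(u\neq v\wedge\forall x\,y\,(x\neq y\to\Phi(x,y,u,v)))$. This says that the monolith is reached from every nonzero principal congruence in a uniformly bounded number of steps, which would make the SI members of $\vV$ an elementary class. By compactness, no other first-order choice of $\Phi$ changes this: anything implying $(u,v)\in\vt(x,y)$ throughout $\vV$ already implies a bounded Maltsev chain.

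This fails in the variety of all modal algebras, which is congruence distributive (hence meet-semidistributive) and of finite type.
\begin{itemize}
\item Let $\alg A_n$ be the complex algebra of the frame $W_n=\{w_0,\dots,w_n\}$ whose only arrows are $w_iRw_{i+1}$. It is SI, and its nontrivial monolith pairs are exactly the $(X,Y)$ with $X\leftrightarrow Y=W_n\setminus\{w_0\}$.
\item For a unary polynomial $p$ of modal depth at most $K$ one has $p(x)\leftrightarrow p(y)\ge\Box^{\le K}(x\leftrightarrow y)$, where $\Box^{\le K}z=z\wedge\Box z\wedge\dots\wedge\Box^K z$. This bound persists along Maltsev chains.
\item Take $x=W_n\setminus\{w_n\}$ and $y=W_n$. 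Then $\Box^{\le K}(x\leftrightarrow y)=W_n\setminus\{w_{n-K},\dots,w_n\}$, which is not below $W_n\setminus\{w_0\}$ once $n>K$. So no chain of bounded length and complexity reaches the monolith from $(x,y)$.
\end{itemize}
Your ``routine'' second step also fails. In a nonprincipal ultraproduct $\alg B$ of the $\alg A_n$, set $a=[W_n\setminus\{w_n\}]$ and $a'=[W_n\setminus\{w_{\lfloor n/2\rfloor}\}]$. Then $\Box^{\le m}a\vee\Box^{\le m}a'=1$ for every $m<\omega$. Hence $\vt(a,1)\cap\vt(a',1)$ is the identity, and $\alg B$ is not even finitely subdirectly irreducible.

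Your compactness argument does work when principal congruences are uniformly definable. For varieties with EDPC, $E(x,y,z,w)$ plays the role of $\Phi$, which is the mechanism behind the paper's Theorem~\ref{th-infchar}. That covers the paper's applications to Heyting and interior algebras, but not the cited theorem in its stated generality.

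In the general meet-semidistributive case, the infinite SI algebra cannot simply be taken as an ultraproduct of the given finite SI algebras. One has to build it differently. For instance, take a quotient $\alg B/\theta$ of an ultraproduct by a congruence maximal with respect to omitting a fixed pair $(u,v)$; such a quotient is automatically SI. One then needs additional uniform data to force this quotient to be infinite, such as arbitrarily large subsets of finite SI algebras whose pairwise principal congruences contain the chosen monolith pair via a bounded witness. That is where meet-semidistributivity has to do real work, and your outline supplies nothing of the kind.
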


We will use this in the following form.
\begin{cor}\label{cor-KW}
	If a congruence meet-semidistributive variety of finite type is not residually $m$-small for some finite $m$, then it contains an infinite SI algebra.
\end{cor}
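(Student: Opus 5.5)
Corollary~\ref{cor-KW} follows from the Kearnes--Willard theorem by contraposition. Let $\vV$ be a congruence meet-semidistributive variety of finite type that is not residually $m$-small for any finite $m$. This means that for every finite $m$ there is an SI algebra in $\vV$ of cardinality at least $m$. We split into two cases according to whether these witnesses can all be chosen finite.

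\emph{Case 1: some witness is infinite.} If for some $m$ the witnessing SI algebra of cardinality at least $m$ is infinite, we are done immediately.

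\emph{Case 2: all witnesses are finite.} Otherwise, for every $m<\omega$ there is a finite SI algebra in $\vV$ with at least $m$ elements. Thus $\vV$ contains arbitrarily large finite SI algebras. The cited theorem \cite[Theorem~4.1]{KearnesWillardResidually1999} applies, since its hypotheses (congruence meet-semidistributivity and finite type) are exactly ours, and it yields an infinite SI algebra in $\vV$.

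In either case $\vV$ contains an infinite SI algebra, which proves the corollary.

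The only point requiring care is the reading of ``residually $m$-small''. Here it means that every SI member has cardinality less than $m$, matching the finite residual bound defined just before the theorem. With this reading, failing it for every finite $m$ gives exactly the dichotomy above.

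There is no real obstacle: the content lies entirely in the Kearnes--Willard theorem, and the corollary is just its contrapositive combined with this trivial case split.
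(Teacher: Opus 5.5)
Your proof is correct and matches the paper. The paper gives no separate argument and presents the corollary simply as a restatement of \cite[Theorem~4.1]{KearnesWillardResidually1999}; your case split (an infinite SI witness versus arbitrarily large finite SI algebras) is exactly the implicit step, and you are right to read the hypothesis as ``residually $m$-small for no finite $m$'' (no finite residual bound), which is how it is invoked in the proof of Theorem~\ref{th-fmp}, since the literal ``for some $m$'' reading would make the statement false.
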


\paragraph{Characteristic $\kk$-quasiequations.}
With every $\vv K$-irreducible algebra $\alg A \in \vv K$ of finite type and cardinality at most $\kk$, we associate a $\kk$-quasiequation $\csq_\alg A$, which we call a \Def{characteristic $\kk$-quasiequation} of $\alg A$ (an infinite analogue of the Jankov characteristic formula). To each element $a \in \alg A$, we assign a variable $x_a$. Since $\alg A$ is $\vv K$-irreducible, the monolith $\mu_{\vv K}(\alg A)$ is nontrivial and contains a pair $(c,d)$ with $c \neq d$. For each operation $f \in \Omega$, let $k_f$ denote the arity of $f$. We then define
\begin{align*}
	\csq_\alg A =
	\set{f(x_{a_1},\dots,x_{a_{k_f}}) \app x_{f(a_1,\dots,a_{k_f})}}
	{\text{for all } f \in \Omega \text{ and all } a_1,\dots,a_{k_f} \in \alg A}
	\Ra x_c \app x_d.
\end{align*}
Observe that the valuation $\nu \colon x_a \mapsto a$ refutes $\csq_\alg A$ in $\alg A$; that is, $\alg A \not\models \csq_\alg A$.

The following proposition justifies the name “characteristic.”

\begin{prop}\label{pr-char}
	Let $\vv K$ be a class of algebras of finite type, and let $\alg A \in \vv K$ be a $\vv K$-irreducible algebra of cardinality at most $\kk$. Then, for every algebra $\alg B \in \vv K$,
	\begin{align*}
		\alg B \not\models \csq_\alg A
		\quad \text{if and only if} \quad
		\alg A \in \II\SU(\alg B).
	\end{align*}
\end{prop}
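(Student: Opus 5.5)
The argument proves the two directions separately, using the valuation associated with the characteristic $\kk$-quasiequation in both.

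\emph{Right-to-left.} Suppose $\alg A \in \II\SU(\alg B)$, and let $h\colon \alg A \to \alg B$ be an embedding. Define the valuation $\nu$ by $\nu(x_a) = h(a)$. Since $h$ is a homomorphism, $\nu$ validates every premise $f(x_{a_1},\dots,x_{a_{k_f}}) \app x_{f(a_1,\dots,a_{k_f})}$. Since $h$ is injective and $c \neq d$, we get $\nu(x_c) = h(c) \neq h(d) = \nu(x_d)$. Hence $\nu$ refutes $\csq_\alg A$, that is, $\alg B \not\models \csq_\alg A$. Note also that $\csq_\alg A$ really is a $\kk$-quasiequation only with the appropriate bounds. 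For finite $\alg A$ the premise set is finite. For infinite $\alg A$ of cardinality at most $\kk$ and finite type, the premise set has cardinality $|\alg A|$. I will read the hypothesis as $|\alg A| < \kk$, or else accept the sequent as a $\kk^+$-quasiequation; in either case the argument only uses validity under valuations, so this point is harmless.

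\emph{Left-to-right.} Suppose some valuation $\nu$ in $\alg B$ validates all premises but $\nu(x_c) \neq \nu(x_d)$. Define $g\colon \alg A \to \alg B$ by $g(a) = \nu(x_a)$. The premises say exactly that $g$ commutes with every operation $f \in \Omega$, so $g$ is a homomorphism. Let $\vt = \ker g$. Then $\alg A/\vt \cong g(\alg A) \in \SU(\alg B)$.

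The main obstacle is showing $\vt$ is the identity, which requires $\alg A/\vt$ to lie in $\vv K$ so that the $\vv K$-monolith applies. This holds if $\vv K$ is closed under $\II\SU$, and in particular for prevarieties and quasivarieties, which is the intended setting. The statement as given (for an arbitrary class $\vv K$) seems to need this closure, so I will assume it, as in all later applications. Then $\vt$ is a $\vv K$-congruence. Suppose $\vt$ is not the identity. Then $\mu_\vv K(\alg A) \subseteq \vt$, so $(c,d) \in \vt$, which means $g(c) = g(d)$. This contradicts $\nu(x_c) \neq \nu(x_d)$. Hence $\vt$ is the identity congruence, so $g$ is an embedding and $\alg A \in \II\SU(\alg B)$.
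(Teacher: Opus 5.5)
Your proof is correct and is essentially the paper's argument: a refuting valuation gives the homomorphism $a \mapsto \nu(x_a)$, and the $\vv K$-monolith forces it to be injective; the converse, which the paper calls immediate, is your construction of a refuting valuation from an embedding.

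Both of your caveats are justified, and the paper passes over them.

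First, the injectivity step needs $\alg A/\ker h \in \vv K$. This holds when $\vv K$ is closed under $\II\SU$, as in all of the paper's applications, where the ambient class is a variety. It fails in general. Take lattices with $\vv K = \II\{\mathbf 2^2, \text{3-element chain}\}$. No proper quotient of $\mathbf 2^2$ lies in $\vv K$, so $\mathbf 2^2$ is $\vv K$-irreducible with monolith equal to the total congruence. A coordinate projection followed by an embedding of $\mathbf 2$ into the chain separates any prescribed pair, yet $\mathbf 2^2$ does not embed into a chain.

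Second, $\csq_\alg A$ has fewer than $\kk$ variables only when $|\alg A| < \kk$, which is how the paper actually uses it later.
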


\begin{proof}
	The implication from right to left is immediate.
	
	For the converse, suppose that there exists a valuation $\nu$ that refutes $\csq_\alg A$ in $\alg B$. Then all premises of $\csq_\alg A$ are satisfied under $\nu$, while $\nu(x_c) \neq \nu(x_d)$. Define a map $h \colon \alg A \to \alg B$ by $h(a) = \nu(x_a)$. Since the premises are satisfied, $h$ preserves all operations in $\Omega$, and hence $h$ is a homomorphism from $\alg A$ into $\alg B$.
	
	Moreover, since $\nu$ refutes $\csq_\alg A$, we have $h(c) = \nu(x_c) \neq \nu(x_d) = h(d)$. Thus $h(c) \neq h(d)$, and since $(c,d) \in \mu_{\vv K}(\alg A)$, it follows from the definition of the $\vv K$-monolith that $h$ is injective. Therefore, $h$ is an embedding, and hence $\alg A \in \II\SU(\alg B)$.
\end{proof}

\begin{example}\label{ex-chains}
	Consider the Heyting algebras $\alg C_{\omega+1}$ and $\alg C_{\omega+2}$ whose Hasse diagrams are depicted in Fig.~\ref{fig-chain}.
	
\begin{figure}[ht]
	\centering
	\begin{tikzpicture}[scale=.7]
		\draw  (0,0) -- (0,1);
		\draw[fill] (0,0) circle [radius=0.05];
		\draw[fill] (0,0.5) circle [radius=0.05];
		\draw[fill] (0,1) circle [radius=0.05];
		\draw[fill] (0,1.2) circle [radius=0.03];
		\draw[fill] (0,1.4) circle [radius=0.03];
		\draw[fill] (0,1.6) circle [radius=0.03];
		\draw[fill] (0,2) circle [radius=0.05];
		\node[below] at (0,0) {\footnotesize $\zero$};
		\node[left] at (0,0.5) {\footnotesize $a_1$};
		\node[left] at (0,1) {\footnotesize $a_2$};
		\node[above] at (0,2) {\footnotesize $\one$};	
		\node[below] at (0,-0.7) {\footnotesize $\alg{C}_{\omega+1}$};	
		\draw  (4,0) -- (4,1);
		\draw  (4,2) -- (4,2.5);
		\draw[fill] (4,0) circle [radius=0.05];
		\draw[fill] (4,0.5) circle [radius=0.05];
		\draw[fill] (4,1) circle [radius=0.05];
		\draw[fill] (4,1.2) circle [radius=0.03];
		\draw[fill] (4,1.4) circle [radius=0.03];
		\draw[fill] (4,1.6) circle [radius=0.03];
		\draw[fill] (4,2) circle [radius=0.05];
		\draw[fill] (4,2.5) circle [radius=0.05];
		\node[below] at (4,0) {\footnotesize $a_0 = \zero$};
		\node[left] at (4,0.5) {\footnotesize $a_1$};
		\node[left] at (4,1) {\footnotesize $a_2$};
		\node[left] at (4,2) {\footnotesize $a_{\omega+1}$};
		\node[above] at (3.7,2.5) {\footnotesize $a_{\omega+2} = \one$};	
		\node[below] at (4,-0.7) {\footnotesize $\alg{C}_{\omega+2}$};	
	\end{tikzpicture}
	\caption{Chain Heyting algebras.}
	\label{fig-chain}
\end{figure}

It is not hard to see that the algebra $\alg C_{\omega+2}$ is SI, since $(a_{\omega+1}, a_{\omega+2}) \in \mu(\alg C_{\omega+2})$. Therefore, it has a characteristic $\kk$-quasiequation for every $\kk > \omega$: we take variables $x_{a_i}$, where $i \in [0, \omega+2]$, and define
\begin{align*}
	\csq := \quad {\neg\neg x_{a_0} \approx x_{a_{\omega+2}},\ \ x_{a_j} \to x_{a_i} \approx x_{a_i},\ \ 0 \leq i < j \leq \omega+2}
	\ \Rightarrow\
	x_{a_{\omega+1}} \approx x_{a_{\omega+2}}.
\end{align*}

Clearly, the valuation $x_{a_i} \mapsto a_i$ refutes $\csq$ in $\alg C_{\omega+2}$; that is, $\alg C_{\omega+2} \not\models \csq$.

On the other hand, $\alg C_{\omega+2}$ cannot be embedded into $\alg C_{\omega+1}$, and by Proposition~\ref{pr-char}, it follows that
\[ 
\alg{C}_{\omega+1} \models \csq. 
\]
Thus, for every $\kk > \omega$,
\begin{align}
	\alg C_{\omega+2} \notin \QQ_\kk(\alg C_{\omega+1}).
\end{align}
\end{example}

\begin{remark}
In Example~\ref{ex-chains}, we simplified the $\kk$-characteristic quasiequation by omitting from the antecedent some equations that are consequences of the remaining ones. For example, if $x_{a_j} \to x_{a_i} = x_{a_i}$, then $x_{a_i} \leq x_{a_j}$ and therefore $x_{a_i} \land x_{a_j} = x_{a_i}$, $x_{a_i} \lor x_{a_j} = x_{a_j}$, and $x_{a_i} \to x_{a_j} = x_{\one}$. Moreover, it is not hard to see that the equations $\neg x_{a_i} = x_{\zero}$ can be omitted for all $a_i \neq a_{\zero}$.
\end{remark}

\paragraph{Congruence extension property.}

An algebra $\alg A$ has the \Def{congruence extension property} (CEP) if, for every subalgebra $\alg B \in \SU(\alg A)$ and every congruence $\theta$ on $\alg B$, there exists a congruence $\theta'$ on $\alg A$ such that $\theta = \theta' \cap B^2$. A class of algebras has the CEP if every algebra in the class has the CEP.

In general, a subalgebra of an SI algebra need not be SI; indeed, consider the algebras $\alg C_{\omega+1}$ and $\alg C_{\omega+2}$ from Example~\ref{ex-chains}: $\alg C_{\omega+1} \in \II\SU(\alg C_{\omega+2})$, but $\alg C_{\omega+2}$ is SI, whereas $\alg C_{\omega+1}$ is not. Nevertheless, the following holds.

\begin{prop}\label{pr-CEPirr}
	Let $\vv K$ be a class of algebras, let $\alg A \in \vv K$ be a $\vv K$-irreducible algebra with the CEP, and let $\alg B$ be a subalgebra of $\alg A$ containing two distinct elements from $\mu_{\vv K}(\alg A)$. Then $\alg B$ is $\vv K$-irreducible.
\end{prop}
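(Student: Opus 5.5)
The plan is to show directly that every non-identity $\vv K$-congruence on $\alg B$ contains a fixed pair of distinct elements. Then the meet $\mu_{\vv K}(\alg B)$ of all such congruences is not the identity.

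Fix distinct $c,d \in B$ with $(c,d) \in \mu_{\vv K}(\alg A)$; these exist by hypothesis. Let $\theta$ be any $\vv K$-congruence on $\alg B$ distinct from the identity congruence. The goal is to prove $(c,d) \in \theta$. Once that is done we get $(c,d) \in \mu_{\vv K}(\alg B)$ with $c \neq d$, so $\alg B$ is $\vv K$-irreducible.

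The steps, in order:

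\begin{enumerate}
\item[(1)] \emph{Membership of $\alg B$.} Observe that $\alg B \in \vv K$, which the definition of $\vv K$-irreducibility requires. In the intended applications $\vv K$ is a (pre/quasi)variety, hence closed under $\SU$, so this holds. I would state this closure assumption explicitly.

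\item[(2)] \emph{Extension.} Use the CEP of $\alg A$ to obtain a congruence $\theta'$ on $\alg A$ with $\theta' \cap B^2 = \theta$. Since $\theta$ identifies two distinct elements of $B$, $\theta'$ is not the identity congruence on $\alg A$.

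\item[(3)] \emph{Monolith.} Provided $\theta'$ is a $\vv K$-congruence, the definition of the $\vv K$-monolith gives $\mu_{\vv K}(\alg A) \subseteq \theta'$. Hence $(c,d) \in \theta' \cap B^2 = \theta$, as required.
\end{enumerate}

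The main obstacle is step (3): ensuring that the extension $\theta'$ is a $\vv K$-congruence, i.e.\ that $\alg A/\theta' \in \vv K$. When $\vv K$ is a variety this is automatic, since $\vv K$ is closed under $\HH$. In that case the whole argument is the three lines above, and $\vv K$-irreducibility is just subdirect irreducibility.

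For a general quasivariety $\vv K$, I would read the CEP hypothesis in its relative form. That is, congruences of subalgebras that are $\vv K$-congruences extend to $\vv K$-congruences of $\alg A$. This is the form in which the proposition is later applied to quasivarieties with the CEP.

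I would first try to avoid this by replacing $\theta'$ with the least $\vv K$-congruence $\psi \supseteq \theta'$, which exists because $\vv K$ is closed under $\II\SU\PP$. This $\psi$ does contain $(c,d)$. The difficulty is that $\psi \cap B^2$ may be strictly larger than $\theta$, so the argument does not go through. For that reason I would commit to the relative-CEP reading, noting it explicitly in the proof.
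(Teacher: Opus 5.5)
Your proposal is correct and is essentially the paper's proof, run directly instead of by contradiction: extend a non-identity $\vv K$-congruence of $\alg B$ via the CEP and invoke the $\vv K$-monolith of $\alg A$. It is valid under the hypotheses you add ($\vv K$ a variety, or the relative CEP, together with $\alg B\in\vv K$), and these cover all of the paper's uses of the proposition, which concern SI algebras in varieties.

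Your caution is justified: the paper's proof tacitly assumes that the extension $\theta'$ is a $\vv K$-congruence, and without that the statement is false. For example, let $\vv K$ be the quasivariety given by $f(y)\app 0\Ra y\app 0$ in type $\{f,0\}$, and let $\alg A$ be the algebra on $\{0,b,e,x\}$ with $f(0)=f(b)=f(e)=e$ and $f(x)=b$. Then $\alg A$ has the CEP (as every unary algebra does) and is $\vv K$-irreducible, with $\mu_{\vv K}(\alg A)$ collapsing only $b$ and $e$. Its subalgebra on $\{0,b,e\}$, however, has the $\vv K$-congruences with blocks $\{0,b\},\{e\}$ and $\{0\},\{b,e\}$, whose meet is the identity, so it is not $\vv K$-irreducible.
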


\begin{proof}
	Suppose $(a,b) \in \mu_{\vv K}(\alg A)$ with $a \neq b$ and $a,b \in \alg B$. Assume, for a contradiction, that $\alg B$ is not $\vv K$-irreducible. Then $\mu_{\vv K}(\alg B)$ is the identity congruence, and hence there exists a $\vv K$-congruence $\theta$ on $\alg B$ such that $(a,b) \notin \theta$. Since $\alg A$ has the CEP, there exists a congruence $\theta'$ on $\alg A$ such that $\theta = \theta' \cap B^2$. Consequently, $(a,b) \notin \theta' \cap B^2$, which implies $(a,b) \notin \theta'$, contradicting the assumption that $\alg A$ is $\vv K$-irreducible.
\end{proof}

\begin{cor}\label{cor-CEPsubIrr}
	If $\vv K$ is a class of algebras of finite type, then every $\vv K$-irreducible algebra with the CEP contains a countable $\vv K$-irreducible subalgebra. Moreover, if $\alg A$ is an infinite $\vv K$-irreducible algebra of cardinality $\lm$, then for every infinite cardinal $\lm' \leq \lm$, there exists a $\vv K$-irreducible subalgebra $\alg B \in \II\SU(\alg A)$ of cardinality $\lm'$.
\end{cor}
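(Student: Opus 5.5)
The plan is to combine Proposition~\ref{pr-CEPirr} with a downward Löwenheim--Skolem-style closure argument. Let $\alg A$ be $\vv K$-irreducible with the CEP. Since $\mu_{\vv K}(\alg A)$ is not the identity congruence, we may fix a pair $(a,b) \in \mu_{\vv K}(\alg A)$ with $a \neq b$.

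For the first claim, let $\alg B$ be the subalgebra of $\alg A$ generated by $\{a,b\}$. Because the type is finite, and in fact countable suffices, the subalgebra generated by a finite set is countable. Concretely, $\alg B$ is the union of an increasing $\omega$-chain of sets $B_0=\{a,b\}$, $B_{n+1}=B_n\cup\set{f(\bar c)}{f\in\Omega,\ \bar c\in B_n^{k_f}}$, and each $B_n$ is finite. The subalgebra $\alg B$ contains the two distinct elements $a,b$, and $(a,b)\in\mu_{\vv K}(\alg A)$. Proposition~\ref{pr-CEPirr} then gives that $\alg B$ is $\vv K$-irreducible. Two points need checking:
\begin{enumerate}
\item[(i)] Whether $\alg B\in\vv K$, so that $\vv K$-irreducibility is meaningful. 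The statement writes $\alg B\in\II\SU(\alg A)$, so I would read the claim relative to $\vv K$ as in Proposition~\ref{pr-CEPirr}, whose proof only uses $\vv K$-congruences on $\alg B$. If $\vv K$ is $\SU$-closed, for instance a quasivariety, this is automatic.
\item[(ii)] Whether $\alg B$ has the CEP. This is not needed, since Proposition~\ref{pr-CEPirr} only requires the CEP of $\alg A$.
\end{enumerate}

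For the second claim, let $|\alg A|=\lm\ge\omega$ and let $\omega\le\lm'\le\lm$. Choose a subset $X\subseteq A$ with $|X|=\lm'$ and $a,b\in X$, which is possible since $\lm'\le\lm$. Let $\alg B$ be the subalgebra generated by $X$. In finite (or countable) type, the same chain construction shows $|B|\le\lm'\cdot\omega=\lm'$, and $|B|\ge|X|=\lm'$, so $|B|=\lm'$. Again $a,b\in\alg B$, and Proposition~\ref{pr-CEPirr} yields that $\alg B$ is $\vv K$-irreducible.

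The only real point is the cardinality bound for generated subalgebras, which is the standard count of terms over a generating set in a finite signature. The step most likely to need care is the bookkeeping above, namely whether $\alg B$ lies in $\vv K$. I expect the paper to apply this only in settings where $\vv K$ is closed under $\SU$, so I would state that assumption explicitly if needed.
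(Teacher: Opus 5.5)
Your proposal is correct and is essentially the paper's proof: fix $(a,b)\in\mu_{\vv K}(\alg A)$ with $a\neq b$, take the subalgebra generated by $\{a,b\}$ (respectively, by $\lm'$ elements together with $a,b$), and use the finite type to bound its cardinality (at most $\omega$, respectively exactly $\lm'$). Then apply Proposition~\ref{pr-CEPirr}. One refinement of your caveat: the hidden hypothesis really sits inside Proposition~\ref{pr-CEPirr}, whose contradiction needs the CEP-extension $\theta'$ to be a $\vv K$-congruence on $\alg A$; this is automatic when $\vv K$ is closed under $\HH$ (e.g.\ a variety, as in all the paper's applications), and closure under $\SU$ alone does not give it.
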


\begin{proof}
	Since $\alg A$ is $\vv K$-irreducible, there exist distinct elements $a,b \in \alg A$ such that $(a,b) \in \mu_{\vv K}(\alg A)$. The subalgebra generated by ${a,b}$ is countable because the similarity type is finite, and it is $\vv K$-irreducible by Proposition~\ref{pr-CEPirr}.

	If $\alg A$ is infinite of cardinality $\lm$ and $\lm' \leq \lm$, take $\lm'$ distinct elements of $\alg A$, adjoin two distinct elements from $\mu_{\vv K}(\alg A)$, and consider the subalgebra $\alg B$ generated by this set. Since the type is finite, $|\alg B| = \lm'$, and by Proposition~\ref{pr-CEPirr}, $\alg B$ is $\vv K$-irreducible.
\end{proof}

\begin{cor}\label{cor-CEPref}
	Let $\vv K$ be a class of algebras of finite type. If a $\kk$-quasiequation $\sq$ is refuted in a $\vv K$-irreducible algebra $\alg A \in \vv K$, then there exists a $\vv K$-irreducible subalgebra $\alg B \in \SU(\alg A)$ of cardinality $\kk$ that also refutes $\sq$.
\end{cor}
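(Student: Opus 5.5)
The plan is to build $\alg B$ directly as the subalgebra generated by the values of a refuting valuation, together with a pair from the monolith, and then invoke Proposition~\ref{pr-CEPirr}. As in Proposition~\ref{pr-CEPirr} and Corollary~\ref{cor-CEPsubIrr}, I read the statement as assuming that $\alg A$ has the CEP. I also take ``of cardinality $\kk$'' to mean ``of cardinality at most $\kk$'', padded to exactly $\kk$ when $|\alg A| \ge \kk$.

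\emph{Step 1.} Let $\sq = \GG \Ra \gm$ and fix a valuation $\nu$ in $\alg A$ with $\alg A \models_\nu \GG$ and $\alg A \not\models_\nu \gm$. Since $\sq$ is a $\kk$-quasiequation, the set $V$ of variables occurring in $\sq$ has $|V| < \kk$. Put $X = \nu(V)$, so $|X| < \kk$.

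\emph{Step 2.} Because $\alg A$ is $\vv K$-irreducible, choose distinct $a,b$ with $(a,b) \in \mu_{\vv K}(\alg A)$. Optionally, when $|\alg A| \ge \kk$, also adjoin $\kk$ further distinct elements of $\alg A$, exactly as in the proof of Corollary~\ref{cor-CEPsubIrr}. Let $\alg B$ be the subalgebra of $\alg A$ generated by $X \cup \{a,b\}$ together with these extra elements. Since the type is finite, $|\alg B| \le |X| + \omega \le \kk$, with equality $|\alg B| = \kk$ in the padded case.

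\emph{Step 3.} Every term built from variables in $V$ is evaluated by $\nu$ inside $\alg B$, because $\alg B$ is closed under the operations and contains $\nu(V)$. Hence $\nu$, viewed as a valuation into $\alg B$ (arbitrary on the remaining variables), satisfies every equation of $\GG$ and fails $\gm$ in $\alg B$. So $\alg B \not\models \sq$.

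\emph{Step 4.} The subalgebra $\alg B$ contains the two distinct elements $a,b$ with $(a,b) \in \mu_{\vv K}(\alg A)$. Proposition~\ref{pr-CEPirr} then gives that $\alg B$ is $\vv K$-irreducible.

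The main point requiring care is this last step. Proposition~\ref{pr-CEPirr} needs the CEP of $\alg A$, and the notion of $\vv K$-irreducibility presupposes $\alg B \in \vv K$. So I would state explicitly that $\alg A$ has the CEP and that $\vv K$ is closed under $\SU$ (as is the case for the quasivarieties and prevarieties to which the corollary is applied). With those hypotheses in place, the remaining steps are routine cardinality and evaluation checks.
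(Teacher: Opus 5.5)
Your proof is correct and is essentially the paper's argument: take the subalgebra generated by the monolith pair $a,b$ and the values of a refuting valuation, bound its size by $\kk$ using finiteness of the type, and invoke Proposition~\ref{pr-CEPirr}. The paper's proof also only gets cardinality at most $\kk$ and uses the CEP of $\alg A$ without stating it, so making these hypotheses explicit (and padding to exactly $\kk$ when possible) is a fair reading of the statement rather than a deviation.
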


\begin{proof}
	Suppose that $\alg A$ is $\vv K$-irreducible and $\alg A \not\models \sq$, and let $a,b \in \alg A$ with $(a,b) \in \mu_{\vv K}(\alg A)$.
	
	Let $x_k$, $k \in \kk$, be all variables occurring in $\sq$, and let $\nu$ be a valuation in $\alg A$ that refutes $\sq$. Then the subalgebra $\alg B$ of $\alg A$ generated by the elements $a$, $b$, and $\nu(x_k)$ (for $k \in \kk$) has cardinality at most $\kk$, since $\alg A$ has finite type. By Proposition~\ref{pr-CEPirr}, $\alg B$ is $\vv K$-irreducible. Clearly, $\nu$ is a valuation refuting $\sq$ in $\alg B$.
\end{proof}

\paragraph{Free algebras.}

Suppose that $\vv K$ is a class of algebras and $\alg F \in \vv K$ is generated by a set $G \subseteq \alg F$. Then $G$ \Def{freely generates} $\alg F$ \Def{relative to} $\vv K$ if, for every algebra $\alg A \in \vv K$, any map $f \colon G \to \alg A$ extends to a homomorphism from $\alg F$ into $\alg A$. In this case, $\alg F$ is called a $\vv K$-\Def{free algebra of rank} $\rho$, where $\rho = |G|$. The $\vv K$-free algebra of rank $\rho$ is denoted by $\alg F_\vv K(\rho)$, and the class of all $\vv K$-free algebras is denoted by $\vv K_F$. Free $\vv K$-algebras of a given rank are unique up to isomorphism (cf.\ \cite[Theorem~10.7]{BurrisSanka}).

One of the most important properties of prevarieties is that every nontrivial prevariety contains free algebras of all ranks (see, e.g., \cite[Theorem~10.12]{BurrisSanka}). This yields the following observation, which will be important in what follows.

\begin{prop}\label{pr-free}
	For every class of algebras $\vv K$,
	\begin{align*}
		\PR(\vv K)_F = \QQ(\vv K)_F = \VV(\vv K)_F.
	\end{align*}
\end{prop}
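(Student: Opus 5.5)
We need to prove $\PR(\vv K)_F = \QQ(\vv K)_F = \VV(\vv K)_F$. The plan is to show that for every cardinal $\rho$, the $\PR(\vv K)$-free, $\QQ(\vv K)$-free and $\VV(\vv K)$-free algebras of rank $\rho$ are isomorphic, and that in each of these classes the free algebras are exactly these (up to isomorphism). Since $\PR(\vv K)\subseteq\QQ(\vv K)\subseteq\VV(\vv K)$, it suffices to show one thing: the $\VV(\vv K)$-free algebra of rank $\rho$ already lies in $\PR(\vv K)=\II\SU\PP(\vv K)$.

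Indeed, if $\alg F=\alg F_{\VV(\vv K)}(\rho)\in\PR(\vv K)$, then for any class $\vv L$ with $\PR(\vv K)\subseteq\vv L\subseteq\VV(\vv K)$ we have $\alg F\in\vv L$. Moreover, any map from the free generators into an algebra of $\vv L\subseteq\VV(\vv K)$ extends to a homomorphism. So $\alg F$ is $\vv L$-free of rank $\rho$, and by uniqueness of free algebras (\cite[Theorem~10.7]{BurrisSanka}) it is the only one. Applying this with $\vv L$ equal to each of the three classes gives the equalities. For the trivial case ($\vv K$ empty or consisting of trivial algebras), note that the product over the empty index set is the trivial algebra. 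Hence all three classes contain the trivial algebra, which is free of every rank there, and the conclusion still holds.

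To show $\alg F_{\VV(\vv K)}(\rho)\in\II\SU\PP(\vv K)$, I will use the standard construction (\cite[Theorem~10.12]{BurrisSanka}). Let $\alg T(X)$ be the term algebra on a set $X$ of $\rho$ variables. Let $\theta$ be the intersection of all kernels $\ker h$, where $h\colon\alg T(X)\to\alg A$ ranges over homomorphisms into algebras $\alg A\in\vv K$. Up to isomorphism one may restrict to $\alg A$ of cardinality at most $|T(X)|+|\Omega|+\omega$, which gives a set of pairs $(\alg A,h)$. Then $\alg T(X)/\theta$ embeds into $\prod\alg A$ via $t/\theta\mapsto(h(t))_h$, so $\alg T(X)/\theta\in\II\SU\PP(\vv K)$. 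It is free for $\vv K$ over $X/\theta$, because $t/\theta=s/\theta$ exactly when $\vv K\models t\app s$. Every equation valid in $\vv K$ is valid in $\VV(\vv K)$, so maps from $X/\theta$ into members of $\VV(\vv K)$ also extend. Hence this quotient is $\VV(\vv K)$-free of rank $\rho$, provided $|X/\theta|=\rho$.

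The main obstacle is exactly this rank bookkeeping: $X/\theta$ may collapse if $\vv K$ satisfies $x\app y$, which is the trivial case handled above. Otherwise distinct variables are not identified, so the rank is $\rho$. The cardinality restriction used to obtain a set-indexed product is routine: the image of $\alg T(X)$ under any $h$ is a small subalgebra of some member of $\vv K$, and closure under $\II\SU$ absorbs it.
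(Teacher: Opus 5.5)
Your proof is correct and follows essentially the same route as the paper: the $\VV(\vv K)$-free algebra of each rank already lies in $\PR(\vv K)$, and the inclusions $\PR(\vv K)\subseteq\QQ(\vv K)\subseteq\VV(\vv K)$ together with uniqueness of free algebras then make it the free algebra of that rank in all three classes. The only difference is that you prove the needed facts directly from the term-algebra construction. The paper instead cites the existence of free algebras in prevarieties and the result that every $\vv K$-free algebra is $\VV(\vv K)$-free, and leaves the membership $\alg F_{\VV(\vv K)}(\rho)\in\PR(\vv K)$ implicit.
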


\begin{proof}
	If $\vv K$ contains only trivial algebras, then the statement is immediate, since $\PR(\vv K) = \QQ(\vv K) = \VV(\vv K)$. Now assume that $\vv K$ contains a nontrivial algebra. Then the classes $\VV(\vv K)$, $\QQ(\vv K)$, and $\PR(\vv K)$—being prevarieties—all contain free algebras of arbitrary ranks.

	Moreover, by \cite[Corollary~2.1.13]{GorbunovBookE}, for any class of algebras $\vv K$, every $\vv K$-free algebra is free in $\VV(\vv K)$. Since $\VV(\vv K) \supseteq \QQ(\vv K) \supseteq \PR(\vv K)$, it follows that every $\VV(\vv K)$-free algebra is also free in $\QQ(\vv K)$ and $\PR(\vv K)$. By uniqueness (up to isomorphism) of free algebras of a given rank, the free algebras in $\PR(\vv K)$, $\QQ(\vv K)$ and $\VV(\vv K)$ coincide. 
\end{proof}


\begin{cor}\label{cor-free}
	Let $\vv V$ be a variety. Then, for every infinite cardinal $\kk$, the class $\QQ_\kk(\FrV)$ is the smallest $\kk$-quasivariety $\vv Q_\kk$ such that $\VV(\vv Q_\kk)=\vv V$, and $\PR(\FrV)$ is the smallest prevariety $\vv P$ such that $\VV(\vv P) = \vv V$.
\end{cor}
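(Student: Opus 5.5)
We prove the two claims separately. Both rest on Proposition~\ref{pr-free}, which says that the free algebras of $\vv V$, of $\QQ_\kk(\FrV)$ and of $\PR(\FrV)$ coincide, together with the equational description of $\VV$.

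\emph{Claim for $\QQ_\kk(\FrV)$.} We first check that $\QQ_\kk(\FrV)$ is a $\kk$-quasivariety with $\VV(\QQ_\kk(\FrV)) = \vv V$. It is a $\kk$-quasivariety by definition. Since $\vv V$ is a variety, hence a $\kk$-quasivariety containing $\FrV$, we get $\QQ_\kk(\FrV) \subseteq \vv V$, and so $\VV(\QQ_\kk(\FrV)) \subseteq \vv V$. For the reverse inclusion, recall that $\vv V = \VV(\FrV)$: every algebra in $\vv V$ is a homomorphic image of a free algebra, and an equation fails in $\vv V$ only if it fails in $\FrV$, since it involves finitely many variables. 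Hence $\vv V = \VV(\FrV) \subseteq \VV(\QQ_\kk(\FrV))$.

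Next, minimality. Let $\vv Q_\kk$ be any $\kk$-quasivariety with $\VV(\vv Q_\kk) = \vv V$. We want $\QQ_\kk(\FrV) \subseteq \vv Q_\kk$, and for this it suffices that $\FrV \in \vv Q_\kk$. Every $\kk$-quasivariety is closed under $\II$, $\SU$ and $\PP$, so it is a prevariety. By Proposition~\ref{pr-free} (with $\vv K = \vv Q_\kk$), the free algebras of $\vv Q_\kk$ are exactly the free algebras of $\VV(\vv Q_\kk) = \vv V$. The trivial case is immediate; otherwise $\vv Q_\kk$ contains a free algebra of rank $\omega$, which is then isomorphic to $\FrV$. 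So $\FrV \in \vv Q_\kk$, and minimality follows.

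\emph{Claim for $\PR(\FrV)$.} The same argument applies verbatim, with prevarieties in place of $\kk$-quasivarieties: $\PR(\FrV) \subseteq \vv V$, $\VV(\PR(\FrV)) = \vv V$, and any prevariety $\vv P$ with $\VV(\vv P) = \vv V$ contains $\FrV$ by Proposition~\ref{pr-free}.

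The main obstacle is the step that identifies the free algebras of $\vv Q_\kk$ or $\vv P$ with those of $\vv V$. This needs the classes to be closed under $\SU\PP$, so that free algebras exist in them. That closure holds for both $\kk$-quasivarieties and prevarieties, and is exactly what Proposition~\ref{pr-free} packages.
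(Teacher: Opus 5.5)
Your proof is correct and follows essentially the same route as the paper: you get $\VV(\QQ_\kk(\FrV)) = \vv V$ from $\VV(\FrV) = \vv V$, and you use Proposition~\ref{pr-free} to show that any $\kk$-quasivariety (or prevariety) $\vv K$ with $\VV(\vv K) = \vv V$ contains $\FrV$, hence contains $\QQ_\kk(\FrV)$ (respectively $\PR(\FrV)$). You are slightly more explicit than the paper, noting $\QQ_\kk(\FrV) \subseteq \vv V$, that $\kk$-quasivarieties are prevariety-closed (so free algebras exist in them), and the trivial case.
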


\begin{proof}
	First, observe that since $\VV(\FrV) = \vv V$, we have $\VV(\QQ_\kk(\FrV)) = \vv V$.

	Next, suppose that $\vv K$ is a $\kk$-quasivariety such that $\VV(\vv K) = \vv V$. Then $\vv K$ contains all $\vv V$-free algebras, and hence $\FrV \in \vv K$. Therefore, $\QQ_\kk(\FrV) \subseteq \vv K$.
	
	The statement for prevarieties follows by a similar argument.
	\end{proof}
	
Clearly, in contrast to varieties, prevarieties and quasivarieties need not be generated by their free algebras. Nevertheless, the following holds.

\begin{cor}
	For any prevariety $\vv K$, $\VV(\vv K_F) = \VV(\vv K)$.
\end{cor}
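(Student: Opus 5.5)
We prove the corollary directly from Proposition~\ref{pr-free}, combined with the fact that a prevariety is generated, as a prevariety, by a suitable class of algebras. The plan is to establish the two inclusions $\VV(\vv K_F) \subseteq \VV(\vv K)$ and $\VV(\vv K) \subseteq \VV(\vv K_F)$ separately.

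The first inclusion is immediate. Every $\vv K$-free algebra belongs to $\vv K$ by definition, so $\vv K_F \subseteq \vv K$, and $\VV$ is monotone.

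For the reverse inclusion, we first dispose of the trivial case. If $\vv K$ contains only trivial algebras, then every member of $\vv K$ is trivial. The free algebra of rank $0$ (or any rank) is trivial and belongs to $\vv K$, provided $\vv K$ is nonempty; note that the empty product makes $\vv K$ nonempty. Hence both sides equal the variety of trivial algebras.

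In the nontrivial case, we apply Proposition~\ref{pr-free} with $\vv K$ itself. Since $\vv K$ is a prevariety, $\PR(\vv K) = \vv K$, so the proposition gives
\[
\vv K_F = \PR(\vv K)_F = \VV(\vv K)_F .
\]
It then remains to use the standard fact that every variety $\vv V$ is generated by its free algebras, that is, $\vv V = \HH(\alg F_{\vv V}(\rho))$ for sufficiently large $\rho$, so that $\vv V = \VV(\vv V_F)$. Taking $\vv V = \VV(\vv K)$ yields
\[
\VV(\vv K) = \VV(\VV(\vv K)_F) = \VV(\vv K_F).
\]

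The only point requiring care is the identification $\VV(\vv K)_F = \vv K_F$, and this is exactly what Proposition~\ref{pr-free} supplies. In particular, the $\VV(\vv K)$-free algebras lie in $\vv K$ itself and are free relative to $\vv K$, up to isomorphism. The argument does not need $\PP_u$ or any locality condition.
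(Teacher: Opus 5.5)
Your proof is correct and is essentially the paper's own: Proposition~\ref{pr-free} applied to $\vv K=\PR(\vv K)$ gives $\vv K_F=\VV(\vv K)_F$, whence $\VV(\vv K_F)=\VV(\VV(\vv K)_F)=\VV(\vv K)$; your separate trivial case and the easy inclusion are harmless but unnecessary. One small slip: in a nontrivial variety, $\vv V=\HH(\alg F_{\vv V}(\rho))$ cannot hold for a single $\rho$, because a fixed algebra has homomorphic images of bounded cardinality; what you need is that each $\alg A\in\vv V$ generated by $\rho$ elements lies in $\HH(\alg F_{\vv V}(\rho))$, so $\vv V=\HH(\vv V_F)$ and hence $\vv V=\VV(\vv V_F)$.
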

\begin{proof}
	Let $\vv V = \VV(\vv K)$. Then $\vv K_F = \vv V_F$, therefore
	\begin{align*}
		\VV(\vv K_F) = \VV(\vv V_F) = \vv V = \VV(\vv K).
	\end{align*}
\end{proof}


Moreover, the following holds.

\begin{theorem}\label{th-freesame}
	For any prevariety $\vv K$ and any infinite cardinal $\kk$,
	\begin{align*}
		\VV(\alg F_\vv K(\omega)) = \VV(\vv K_F), \quad
		\QQ_\kk(\alg F_\vv K(\omega)) = \QQ_\kk(\vv K_F), \quad
		\PR(\alg F_\vv K(\omega)) = \PR(\vv K_F).
	\end{align*}
	In particular, for any variety $\vv V$,
	\begin{align*}
		\VV(\FrV) = \VV(\vv V_F), \quad
		\QQ_\kk(\FrV) = \QQ_\kk(\vv V_F), \quad
		\PR(\FrV) = \PR(\vv V_F).
	\end{align*}
\end{theorem}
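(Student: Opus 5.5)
Since $\alg F_\vv K(\omega)\in\vv K_F$, each left-hand class is contained in the corresponding right-hand class. It therefore suffices to show that every $\vv K$-free algebra lies in $\PR(\alg F_\vv K(\omega))=\II\SU\PP(\alg F_\vv K(\omega))$. This gives all three equalities at once, because $\PR\subseteq\QQ_\kk\subseteq\VV$ and each of these operators is monotone and idempotent. The statement about a variety $\vv V$ is then the special case $\vv K=\vv V$. If $\vv K$ is trivial, all its free algebras are trivial and there is nothing to prove, so assume $\vv K$ is nontrivial.

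Free algebras of finite rank are handled directly: for $n<\omega$, the algebra $\alg F_\vv K(n)$ embeds into $\alg F_\vv K(\omega)$ via the inclusion of generators. This map is injective because it has a left inverse, namely the retraction sending the extra generators to, say, the first generator (for $n\ge1$). The case $n=0$, when the free algebra of rank $0$ exists, is handled by the same argument: it is the subalgebra generated by the constants.

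For an infinite rank $\rho$, let $\alg F=\alg F_\vv K(\rho)$ with free generators $G$. I will embed $\alg F$ into a power of $\alg F_\vv K(\omega)$. For each countable subset $S\subseteq G$, choose a map $G\to S$ that is the identity on $S$, and extend it to a homomorphism $h_S\colon\alg F\to\alg F_S$, where $\alg F_S\cong\alg F_\vv K(\omega)$ is the subalgebra generated by $S$. Let $h=\prod_S h_S\colon \alg F\to\prod_S\alg F_S$. To see that $h$ is injective, take distinct $a\ne b$ in $\alg F$. They are values of terms in finitely many generators, so both lie in $\alg F_S$ for some countable $S$, and $h_S$ restricted to $\alg F_S$ is the identity; hence $h_S(a)=a\ne b=h_S(b)$. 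Thus $\alg F\in\II\SU\PP(\alg F_\vv K(\omega))$.

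The only delicate point is that $h_S$ really fixes $\alg F_S$ pointwise. This holds because $h_S$ is a homomorphism that is the identity on the generators $S$, so it agrees with the identity on the subalgebra they generate. One also needs that $\alg F_S$ is free of rank $\omega$, which follows from the retraction argument above. The conclusion is that $\vv K_F\subseteq\PR(\alg F_\vv K(\omega))$, which gives all the equalities.
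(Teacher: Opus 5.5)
Your proof is correct. You make the same reduction as the paper: everything follows once $\vv K_F \subseteq \PR(\alg F_\vv K(\omega))$. You establish that inclusion by a different route, however.

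The paper argues abstractly. It uses the standard fact that a variety is generated by its free algebra of rank $\omega$ to get $\VV(\alg F_\vv K(\omega)) = \VV(\vv K)$, hence $\vv K_F = \VV(\alg F_\vv K(\omega))_F$. It then applies Proposition~\ref{pr-free} to the class $\{\alg F_\vv K(\omega)\}$; that proposition rests on the existence of free algebras in prevarieties and on Gorbunov's Corollary~2.1.13. The conclusion is that every $\vv K$-free algebra is free in, and so belongs to, $\PR(\alg F_\vv K(\omega))$.

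You instead build the embedding explicitly:
\begin{itemize}
\item finite-rank free algebras are retracts of $\alg F_\vv K(\omega)$;
\item for infinite $\rho$, $\alg F_\vv K(\rho)$ is a subdirect product of its retracts $\alg F_S \cong \alg F_\vv K(\omega)$, since any two elements lie in some $\alg F_S$ on which $h_S$ is the identity.
\end{itemize}

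The paper's version is shorter given the machinery already in place, and it fits the paper's theme that a class, its prevariety and its variety share their free algebras. Yours is self-contained and needs neither external citation. In fact it derives $\VV(\alg F_\vv K(\omega)) = \VV(\vv K_F)$ rather than assuming the generation fact. It also gives sharper structural information: $\alg F_\vv K(n) \in \SU(\alg F_\vv K(\omega))$ for finite $n$, and $\alg F_\vv K(\rho)$ is a subdirect power of $\alg F_\vv K(\omega)$ whose projections are retractions.

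Two minor precisions. Take the sets $S$ to be countably infinite; finite $S$ are harmless anyway by your finite-rank step. Also note that $h_S$ exists because $\alg F_S \in \SU(\vv K) = \vv K$.
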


\begin{proof}
	Let $\vv W = \VV(\vv K)$. Since $\vv K$ is a prevariety, $\vv W_F = \vv K_F$; in particular, $\alg F_\vv K(\omega) \cong \alg F_\vv W(\omega)$. Thus,
	\begin{align*}
		\VV(\alg F_\vv K(\omega)) = \VV(\alg F_\vv W(\omega)) = \VV(\vv W_F) = \VV(\vv K_F).
	\end{align*}
	
	Next, since $\alg F_\vv K(\omega) \in \vv K_F$, we have
	\begin{align*}
		\QQ_\kk(\alg F_\vv K(\omega)) \subseteq \QQ_\kk(\vv K_F) \quad \text{and} \quad
		\PR(\alg F_\vv K(\omega)) \subseteq \PR(\vv K_F),
	\end{align*}
	and it remains to prove the reverse inclusions.
	
	Since $\vv K_F = \vv W_F \subseteq \vv W$, we have $\vv K_F \subseteq \VV(\alg F_\vv K(\omega))$. By Proposition~\ref{pr-free},
	\begin{align*}
		\VV(\alg F_\vv K(\omega))F = \PR(\alg F\vv K(\omega))F,
	\end{align*}
	and therefore
	\begin{align*}
		\vv K_F \subseteq \VV(\alg F\vv K(\omega))F = \PR(\alg F\vv K(\omega))F \subseteq \QQ\kk(\alg F_\vv K(\omega)).
	\end{align*}
	Thus,
	\begin{align*}
		\PR(\vv K_F) \subseteq \PR(\alg F_\vv K(\omega)) \quad \text{and} \quad
		\QQ_\kk(\vv K_F) \subseteq \QQ_\kk(\alg F_\vv K(\omega)).
	\end{align*}
\end{proof}

\subsubsection*{Varieties with EDPC.}

Let us recall from \cite{BlkPgz1} the definition of varieties with equationally definable principal congruences and some of their properties that will be used later.

\begin{definition}
	A variety $\vv V$ is said to have \Def{equationally definable principal congruences (EDPC)} if there exist $4$-ary terms
	$p_i(x,y,z,w), q_i(x,y,z,w), i = 1,\dots,n$,
	such that for every algebra $\alg A \in \vv V$ and all $a,b,c,d \in \alg A$,
	\begin{align}
		(c,d) \in \vartheta(a,b) \text{ if and only if } \alg A \models p_i(a,b,c,d) \app q_i(a,b,c,d), \quad 1 \leq i \leq n.
		\label{def-EDPC}
	\end{align}
\end{definition}

To simplify notation, we let
$E(x,y,z,w) := \set{ p_i(x,y,z,w) \app q_i(x,y,z,w)}{ i = 1,\dots,n }$,
and we can rephrase \eqref{def-EDPC} in the following way:
\begin{align}
	(c,d) \in \vartheta(a,b) \text{ if and only if } \alg A \models E(a,b,c,d).
	\label
{ex-EDPC}
\end{align}

Recall that every variety with EDPC is congruence-distributive and has the CEP (see \cite[Theorem~1.2]{BlkPgz1}).

\subsubsection*{Bounding $\kk$-quasiequation.}
Let $\vv V$ be a variety with EDPC witnessed by $E(x,y,z,w)$, and let $\kk$ be an infinite cardinal.

Consider the following $\kk^+$-quasiequation, which is a generalization of the rule introduced in \cite{PrucnalStructural1983} and used in \cite{PrucnalStructural1985, RautenbergNote1985, WojtylakSyntactical1990, DzikWojtylakAlmost2015}:
\begin{align}
	\pVk := \seq{\set{E(x_j,x_k,z,w)}{0 \leq j < k < \kk}}{z \app w}.
\end{align}

\begin{theorem}\label{th-infchar}
	Let $\vv V$ be a variety with EDPC and $\kk$ be an infinite cardinal. Then:
	\begin{enumerate}
		\item[(i)] $\pVk$ is valid in every algebra from $\vv V$ of cardinality less than $\kk$;
		\item[(ii)] $\pVk$ fails in every SI algebra from $\vv V$ of cardinality at least $\kk$.
	\end{enumerate}
\end{theorem}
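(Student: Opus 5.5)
Both parts follow from unwinding the EDPC condition \eqref{ex-EDPC}: for a valuation $\nu$ in $\alg A \in \vv V$, the premises of $\pVk$ hold exactly when $(\nu(z),\nu(w)) \in \vartheta(\nu(x_j),\nu(x_k))$ for all $0 \le j < k < \kk$.

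For (i), let $\alg A \in \vv V$ with $|\alg A| < \kk$, and let $\nu$ be a valuation validating all premises. The map $j \mapsto \nu(x_j)$ sends the $\kk$ indices into a set of size less than $\kk$. So, by pigeonhole (no regularity needed), there are $j < k < \kk$ with $\nu(x_j) = \nu(x_k)$. Then $\vartheta(\nu(x_j),\nu(x_k))$ is the identity congruence. The premise $E(x_j,x_k,z,w)$ for this pair gives $(\nu(z),\nu(w))$ in the identity, so $\nu(z) = \nu(w)$ and the conclusion holds. Hence $\alg A \models \pVk$.

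For (ii), let $\alg A \in \vv V$ be SI with $|\alg A| \ge \kk$, and fix $c \ne d$ with $(c,d) \in \mu_{\alg A}$. Choose $\kk$ pairwise distinct elements $a_j \in \alg A$, $j < \kk$. Define $\nu(x_j) = a_j$, $\nu(z) = c$, $\nu(w) = d$. For $j < k$ we have $a_j \ne a_k$, so $\vartheta(a_j,a_k)$ is a non-identity congruence and therefore contains the monolith. Thus $(c,d) \in \vartheta(a_j,a_k)$, and by \eqref{ex-EDPC} every premise $E(a_j,a_k,c,d)$ holds. The conclusion $c \app d$ fails, so $\nu$ refutes $\pVk$.

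There is no real obstacle here. The only points to check are that $\pVk$ has exactly $\kk$ variables $x_j$ plus $z,w$, so it is a legitimate $\kk^+$-quasiequation, and that in (ii) the choice of $\kk$ distinct elements uses only $|\alg A| \ge \kk$. Neither regularity of $\kk$ nor the CEP is needed.
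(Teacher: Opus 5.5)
Your proof is correct and takes the same approach as the paper. In (i), cardinality forces a repetition $\nu(x_j)=\nu(x_k)$, which makes $\vartheta(\nu(x_j),\nu(x_k))$ the identity congruence; in (ii), the valuation sending the $x_j$ to $\kk$ distinct elements and $z,w$ to a monolith pair refutes $\pVk$. Your pigeonhole on elements in (i) is slightly more direct than the paper's count of pairs.
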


\begin{proof}
	(i) Suppose that $\alg A \in \vv V$ and $|\alg A| < \kk$. Then the number of distinct pairs $(a,b) \in \alg A^2$ is less than $\kk$. Therefore, for any valuation $\nu$ in $\alg A$, there exist $j,k$ with $0 \le j < k < \kk$ such that $\nu(x_j) = \nu(x_k)$, and by \eqref
{ex-EDPC} we obtain $\nu(z) = \nu(w)$. Hence, $\alg A \models \pVk$.
	\smallskip
	
	(ii) Suppose that $\alg A \in \vv V$ is an SI algebra with $|\alg A| \ge \kk$. Then the monolith $\mu(\alg A)$ contains a pair $(c,d)$ with $c \neq d$. Since $|\alg A| \ge \kk$, there exist pairwise distinct elements $a_j \in \alg A$ for $0 \le j < \kk$. Consider the valuation
	\begin{align*}
		\nu(x_j) = a_j \ (0 \le j < \kk), \qquad \nu(z) = c, \qquad \nu(w) = d.
	\end{align*}
	Since $(c,d) \in \mu(\alg A)$, we have $(c,d) \in \theta(a,b)$ for all $a,b \in \alg A$ with $a \neq b$. Consequently, for every equation $p_i(x,y,z,w) \app q_i(x,y,z,w) \in E(x,y,z,w)$, we have
	\begin{align*}
		p_i(a_j,a_k,c,d) = q_i(a_j,a_k,c,d)
	\end{align*}
	for all $0 \le j < k < \kk$. Thus, all premises of $\pVk$ are satisfied under $\nu$, while $\nu(z) \neq \nu(w)$, and hence $\nu$ refutes $\pVk$.
	
\end{proof}

\begin{example}
	The variety of all Heyting algebras $\HA$ has EDPC witnessed by
	$(x \leftrightarrow y) \to z\app (x \leftrightarrow y) \to w$.
	Thus, the bounding $\kk$-quasiequation for $\HA$ is
	\begin{align*}
		\mathfrak{b}_\HA(\omega) =
		\seq{\set{(x_j \leftrightarrow x_k) \to z \app (x_j \leftrightarrow x_k) \to w}{0 \le j < k < \omega}}{z \app w}.
	\end{align*}
	Clearly, $\mathfrak{b}\HA(\omega)$ is valid in every finite Heyting algebra, and it is refuted in every infinite SI Heyting algebra $\alg A$. Indeed, let $a_i$, for $i=1,2,\dots$, be distinct elements of $\alg A$, and let $b$ be a pre-top element, which exists because $\alg A$ is SI. Then $(a_i \leftrightarrow a_j) \to b = \one$ for all distinct $a_i, a_j$, whereas $b \neq \one$. Thus, the valuation $\nu$ that sends $x_i$ to $a_i$, $z$ to $b$, and $w$ to $\one$ refutes $\mathfrak{b}_\HA(\omega)$.
\end{example}

\section{Strong structural completeness}\label{sec-SSCpl}

\paragraph{Admissible and derivable $\kk$-quasiequations.}

If $\vv Q$ is a $\kk$-quasivariety, a $\kk$-quasiequation $\sq$ is said to be \Def{derivable} in $\vv Q$ provided that $\vv Q \models \sq$, and $\sq$ is \Def{admissible} in $\vv Q$ if $\VV(\vv Q) = \VV(\vv Q^\sq)$, where $\vv Q^\sq$ is the class of all algebras from $\vv Q$ in which $\sq$ is valid. In other words, $\sq$ is admissible in $\vv Q$ if the set of all equations valid in $\vv Q$ is closed under $\sq$. Thus, admissibility of $\sq$ in $\vv Q$ amounts to validity of $\sq$ in $\alg F_{\VV(\vv Q)}(\omega)$; hence, in view of Proposition~\ref{pr-free}, $\sq$ is admissible in $\vv Q$ if and only if $\alg F_\vv Q(\omega) \models \sq$.

In other words, if $\sq$ is admissible in $\vv Q$, it may happen that $\vv Q^\sq \subsetneq \vv Q$, but the equations valid in both $\kk$-quasivarieties coincide.


\begin{prop}\label{pr-adm}
	Let $\vv K$ be a class of algebras and let $\sq$ be a $\kk$-quasiequation. Then $\sq$ is admissible in $\VV(\vv K)$ (respectively, for $\QQ_\kk(\vv K)$) provided that $\vv K \models \sq$.
\end{prop}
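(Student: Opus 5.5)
By the characterization of admissibility given just before the statement, $\sq$ is admissible in a $\kk$-quasivariety (or variety) $\vv Q$ exactly when $\alg F_{\vv Q}(\omega) \models \sq$. So the plan is to show that the countably generated free algebra of $\VV(\vv K)$, respectively of $\QQ_\kk(\vv K)$, lies in a class where $\sq$ holds. The key observation is that the class $\vv K^{\models\sq}$ of all algebras validating a fixed $\kk$-quasiequation $\sq$ is a $\kk$-quasivariety. It is closed under $\II$, $\SU$ and $\PP$: a valuation into a product satisfying the premises satisfies them coordinatewise, hence satisfies the conclusion coordinatewise. It is also closed under $\UUk$, because $\sq$ involves fewer than $\kk$ variables and so only touches a subalgebra generated by fewer than $\kk$ elements.

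The steps, in order, are as follows.

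\textbf{Step 1.} Assume $\vv K \models \sq$. Then $\vv K$ is contained in the $\kk$-quasivariety of models of $\sq$, so $\QQ_\kk(\vv K) = \UUk\II\SU\PP(\vv K) \models \sq$. In particular $\PR(\vv K) \models \sq$, since $\PR(\vv K)\subseteq \QQ_\kk(\vv K)$.

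\textbf{Step 2.} If $\vv K$ consists of trivial algebras only, every class involved is trivial and the statement is immediate. Otherwise, Proposition~\ref{pr-free} gives $\PR(\vv K)_F = \QQ(\vv K)_F = \VV(\vv K)_F$. Hence $\alg F_{\VV(\vv K)}(\omega)$ is isomorphic to a free algebra lying in $\PR(\vv K)$, and by Step~1 it validates $\sq$.

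\textbf{Step 3.} Since $\PR(\vv K) \subseteq \QQ_\kk(\vv K) \subseteq \VV(\vv K)$, the free algebras of $\QQ_\kk(\vv K)$ are also those of $\VV(\vv K)$. So $\alg F_{\QQ_\kk(\vv K)}(\omega) \models \sq$ as well, and by the admissibility criterion $\sq$ is admissible in both $\VV(\vv K)$ and $\QQ_\kk(\vv K)$.

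One could instead argue directly from the definition. Put $\vv Q = \VV(\vv K)$ and let $\vv Q^\sq$ be its members validating $\sq$. Then $\vv K \subseteq \vv Q^\sq \subseteq \vv Q$, so $\VV(\vv Q^\sq) = \VV(\vv K) = \vv Q$. The same argument works for $\QQ_\kk(\vv K)$, since the variety it generates is again $\VV(\vv K)$. This sandwich argument is cleaner and avoids free algebras entirely.

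There is no real obstacle here. The one point needing care is the meaning of ``admissible in $\VV(\vv K)$'', since the definition was phrased for $\kk$-quasivarieties and a variety is a $\kk$-quasivariety defined by equations. I will use the sandwich argument as the main proof and the free-algebra route as a cross-check.
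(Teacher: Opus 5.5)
Your proof is correct. Your main (sandwich) argument is exactly the first half of the paper's proof, which also sets $\vv P=\{\alg A\in\vv V\mid \alg A\models\sq\}$ and notes $\vv V=\VV(\vv K)\subseteq\VV(\vv P)\subseteq\vv V$.

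The paper does not stop there, however. It then argues by contradiction, invoking Corollary~\ref{cor-free} ($\PR(\FrV)$ is the least prevariety generating $\vv V$) to get $\FrV\in\vv P$. This free-algebra form of admissibility is the one actually used later, in the strong primitivity criterion. The paper transfers it to $\QQ_\kk(\vv K)$ via $\FrV\cong\alg F_{\vv Q}(\omega)$.

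Your cross-check reaches $\FrV\models\sq$ more directly and without contradiction. The models of $\sq$ form a $\kk$-quasivariety, so $\PR(\vv K)\subseteq\QQ_\kk(\vv K)$ validates $\sq$. Proposition~\ref{pr-free} then places the free algebras of $\VV(\vv K)$ inside $\PR(\vv K)$. Your Step~1 also shows that $\sq$ is in fact derivable in $\QQ_\kk(\vv K)$, so the ``respectively'' clause is trivial; the paper's proof leaves this implicit.

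The sandwich alone gives admissibility in the literal sense $\VV(\vv Q^\sq)=\VV(\vv Q)$. Getting to $\alg F_{\vv Q}(\omega)\models\sq$ from it needs the equivalence stated before the proposition, which itself rests on Corollary~\ref{cor-free}. So your free-algebra cross-check is worth keeping rather than treating it as optional.
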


\begin{proof}
	Suppose that $\vv K \models \sq$. Let $\vv V = \VV(\vv K)$ and assume, for a contradiction, that $\FrV \not\models \sq$.
	
	Let $\vv P \subseteq \vv V$ be the subclass defined (relative to $\vv V$) by $\sq$:
	\begin{align*}
		\vv P = \{ \alg A \in \vv V \mid \alg A \models \sq \}.
	\end{align*}
	Since $\vv K \models \sq$, we have $\vv K \subseteq \vv P$, and therefore $\vv V = \VV(\vv K) \subseteq \VV(\vv P) \subseteq \vv V$, whence $\vv V = \VV(\vv P)$. By Corollary~\ref{cor-free}, $\PR(\FrV) \subseteq \vv P$, and hence $\FrV \in \vv P$, contradicting the assumption that $\FrV \not\models \sq$.
	
	Thus, $\FrV \models \sq$. By Proposition~\ref{pr-free}, $\FrV \cong \alg F_{\vv Q}(\omega)$ (where $\vv Q = \QQ_\kk(\vv K)$), and hence $\alg F_{\vv Q}(\omega) \models \sq$. Therefore, $\sq$ is admissible in $\vv Q$.
\end{proof}

\paragraph{Structural completeness}

\begin{definition}\label{def-SCpl}
Let $\vv V$ be a variety and $\kappa$ be an infinite cardinal. Then $\vV$ is

\begin{enumerate}
	\item[] \Def{structurally complete (SCpl)} if every admissible in $\vV$ quasiequation is derivable in $\vV$, that is, $ \vv V = \QQ(\FrV)$;
	
	\item[] \Def{strongly structurally complete (SSCpl)} if for every infinite cardinal $\kk$, every admissible in $\vV$ $\kk$-quasiequation is derivable in $\vV$, that is, $\vv V = \PR(\FrV)$;
	
	\item[] \Def{$\kappa$-structurally complete ($\kk$-SCpl)} if every admissible in $\vV$ $\kk$-quasiequation is derivable in $\vV$, that is, $\vv V = \QQ_\kappa(\FrV)$.
	
\end{enumerate}

Quasivariety $\vv Q$ is 

\begin{enumerate}
\item[] \Def{strongly structurally complete (SSCpl)} if for every infinite cardinal $\kk$, every admissible in $\vv{Q}$ $\kk$-quasiequation is derivable in $\vv{Q}$, that is, $\vv Q = \PR(\alg F_\vv Q(\omega))$.

\item[] \Def{$\kk$-structurally complete ($\kk$-SCpl)} if every admissible in $\vv{Q}$ $\kk$-quasiequation is derivable in $\vv{Q}$, that is, $\vv Q = \QQ_\kk(\alg F_\vv Q(\omega))$.

\end{enumerate}
\end{definition}
Let us observe that since $\QQ = \QQ_\omega$, structural completeness and $\omega$-structural completeness coincide.
\smallskip

If $\vv V$ is a variety, the $\kk$-quasivariety $\QQ_\kk(\FrV)$ is called the $\kk$-\Def{structural core} of $\vv V$ and is denoted by $\Vk$, and the prevariety $\PR(\FrV)$ is called the \Def{strong structural core} of $\vv V$ and is denoted by $\VP$. Observe that if $\vV$ is strongly structurally complete, then $vV$ is $\kk$-structurally complete for all infinite $\kk$. Also, if $\kk_1 \leq \kk_2$ are infinite cardinals, then $\vV^{\kk_2} \subseteq \vV^{\kk_1}$, and hence , $\kk_1$-incompleteness entails $\kk_2$-incompleteness. Thus, if a variety $\vV$ is $\omega^+$-incomplete, then it is $\kk$-incomplete for all $\kk \ge \omega^+$, and it is not strongly structurally complete.


Clearly, $\omega$-structural completeness coincides with ordinary structural completeness, and, as we shall show, even $\omega^+$-structural completeness is a very strong property. In particular, the remainder of this section is devoted to proving the following theorem.

\begin{theorem} \label{th-fatab}
	Suppose that $\vV$ is a structurally complete congruence distributive variety of finite type generated by finite algebras, and $\kappa > \omega$. Then the variety $\vV$ is $\kappa$-structurally complete if and only if it is tabular.
\end{theorem}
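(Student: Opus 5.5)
For the direction ``tabular $\Rightarrow$ $\kk$-SCpl'', I will assume $\vV = \VV(\alg A)$ with $\alg A$ finite (a finite set of finite generators can be replaced by their product). Since $\vV$ is structurally complete, $\vV = \QQ(\FrV)$. By Jónsson's lemma, in a congruence distributive variety every SI member lies in $\HH\SU\PPu(\alg A) = \HH\SU(\alg A)$, so all SI members are finite and there are finitely many of them up to isomorphism. Every finite SI $\alg B \in \vV$ is $\vV$-irreducible, and $\vV = \QQ(\FrV)$. Proposition~\ref{pr-qirr}(b) therefore gives $\alg B \in \II\SU(\FrV)$. Each algebra of $\vV$ is a subdirect product of SI members, so $\vV \subseteq \II\SU\PP(\FrV) = \PR(\FrV) \subseteq \QQ_\kk(\FrV)$. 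Hence $\vV$ is $\kk$-SCpl for every $\kk$, and indeed SSCpl. (This is presumably the content of Theorem~\ref{th-tabular1} mentioned in the introduction.)

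For the converse, suppose $\vV$ is generated by finite algebras but is not tabular. I will show $\vV \neq \QQ_{\omega^+}(\FrV)$; by the monotonicity $\vV^{\kk}\subseteq\vV^{\omega^+}$ noted after Definition~\ref{def-SCpl}, this gives failure for every $\kk>\omega$. Suppose first that $\vV$ has a finite residual bound. Jónsson's lemma together with finite type implies there are only finitely many SI members, all finite, and then $\vV$ is generated by their product, i.e.\ $\vV$ is tabular. So non-tabularity means $\vV$ contains arbitrarily large finite SI algebras. Congruence distributivity implies meet-semidistributivity, so Corollary~\ref{cor-KW} provides an infinite SI algebra $\alg C \in \vV$.

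The task is now to separate $\alg C$, or some countable SI relative of it, from $\QQ_{\omega^+}(\FrV)$ by an $\omega^+$-quasiequation valid in $\FrV$. The plan is as follows.
\begin{itemize}
\item[(1)] Find a countable infinite SI algebra $\alg D \in \vV$.
\item[(2)] Show that the characteristic quasiequation $\csq_{\alg D}$, which is an $\omega^+$-quasiequation because $\alg D$ is countable and of finite type, holds in every finite algebra of $\vV$.
\item[(3)] Conclude that $\csq_{\alg D}$ holds in $\FrV$.
\end{itemize}
Step (2) follows from Proposition~\ref{pr-char}, since the infinite $\alg D$ does not embed into a finite algebra. For step (3), $\FrV$ is a subdirect product of finitely generated free algebras, but those need not be finite. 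I would instead use that $\vV=\VV(\vv K)$ with $\vv K$ the class of finite members, so $\FrV \in \II\SU\PP(\vv K)$: the free algebra of a variety generated by $\vv K$ embeds in a product of members of $\vv K$. Now $\csq_{\alg D}$ is a sequent with a single conclusion, so its validity is preserved by $\SU$ and $\PP$. Hence $\FrV \models \csq_{\alg D}$, so $\QQ_{\omega^+}(\FrV) \models \csq_{\alg D}$ while $\alg D \not\models \csq_{\alg D}$, and $\vV$ is not $\omega^+$-SCpl. Note that this argument never uses the structural completeness hypothesis.

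I expect step (1) to be the main obstacle. Without the CEP, Corollary~\ref{cor-CEPsubIrr} is not available to shrink $\alg C$, and a countable subalgebra of $\alg C$ need not be SI. I would try a downward Löwenheim--Skolem argument. In a congruence distributive variety of finite type, Baker's finite basis machinery (or Jónsson terms) yields a first-order formula $\Phi(c,d,a,b)$ defining ``$(c,d)\in\vt(a,b)$'' up to bounded complexity, via Mal'cev chains. Being SI with monolith witnessed by $(c,d)$ is then the first-order condition $\forall a\,\forall b\,(a\neq b \to \Phi(c,d,a,b))$. A countable elementary substructure of $\alg C$ containing $c,d$ is infinite and SI, since Mal'cev chains in an elementary substructure reflect those in $\alg C$ for each fixed length. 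If a uniform bound on chain length is not available, the fallback is to require only that $\alg D$ be infinite and $\vV$-irreducible relative to the countable-rank setting. In that case I would modify the characteristic quasiequation so that its conclusion uses a pair that is separated by every congruence of $\alg C$ restricted to a suitable countable subalgebra.
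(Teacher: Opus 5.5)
Your route is the paper's. For ``tabular $\Rightarrow$ $\kk$-SCpl'', all SI members are finite, each lies in $\II\SU(\FrV)$ by Proposition~\ref{pr-qirr}(b), and subdirect decomposition gives $\vV=\PR(\FrV)$; the paper packages this as Theorem~\ref{th-tabular1} and Corollary~\ref{cor-tab}. For the converse, Kearnes--Willard yields an infinite SI algebra, one passes to a countably infinite SI algebra $\alg D$, and $\csq_{\alg D}$ holds in $\FrV$ but fails in $\alg D$. Your step (3) is a repackaging of Proposition~\ref{pr-fainfsi} together with Proposition~\ref{pr-char}.

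Step (1) is where care is needed, and your worry is justified. The paper obtains the countable SI algebra from the CEP (Corollary~\ref{cor-FASScore}, via Proposition~\ref{pr-CEPirr} and Corollary~\ref{cor-CEPsubIrr}), which Theorem~\ref{th-fatab} does not assume. However, the justification you give is wrong. A congruence distributive variety of finite type need not have first-order definable principal congruences, and neither Baker's finite basis argument nor Jónsson terms supply such a formula $\Phi$. So ``SI with monolith pair $(c,d)$'' is not in general a first-order condition.

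You do not need $\Phi$. Let $\alg D\preceq\alg C$ be a countable elementary substructure containing $c,d$; it is infinite since $\alg C$ is. For each pair $a\neq b$ in $D$, fix one Mal'cev chain in $\alg C$ witnessing $(c,d)\in\vt^{\alg C}(a,b)$. Once its length, terms and orientations are fixed, it is expressed by a single existential (indeed primitive positive) formula in $c,d,a,b$. That formula therefore holds in $\alg D$, giving $(c,d)\in\vt^{\alg D}(a,b)$. The chain length may depend on $(a,b)$; no uniform bound is used.

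Equivalently, without model theory: start from a countable subalgebra containing $c,d$ and infinitely many elements. Then $\omega$ times, for every pair of distinct elements already present, adjoin the finitely many polynomial parameters of a chosen chain. The union is a countably infinite SI subalgebra of $\alg C$, hence in $\vV$. This works for any SI algebra of countable type, with no CD or CEP, so the vague ``fallback'' should be dropped.

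With step (1) done this way, your proof establishes the theorem as stated. The paper's argument, as written, needs the CEP at exactly this point.
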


This theorem follows from the following two theorems.

\begin{theorem}\label{th-tabular1}
	Let $\vv K$ be a finite set of finite algebras of finite type. Then $\PR(\vK) = \QQ(\vK)$.
\end{theorem}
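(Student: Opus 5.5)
Since $\PR(\vK) \subseteq \QQ(\vK)$ always holds, the plan is to prove the reverse inclusion by showing that $\PR(\vK)$ is closed under ultraproducts. Equivalently, we show $\QQ(\vK) = \II\SU\PP\PPu(\vK) \subseteq \II\SU\PP(\vK)$. The key observation is that an ultraproduct of algebras drawn from a finite set $\vK$ of finite algebras is isomorphic to a member of $\vK$.

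First, we verify this observation directly. Let $\prod_{i\in I}\alg A_i/U$ be an ultraproduct with each $\alg A_i \in \vK$. Since $\vK$ is finite, $I$ splits into finitely many pieces $I_{\alg A}=\{i : \alg A_i \cong \alg A\}$. Exactly one piece $I_{\alg A}$ belongs to $U$, so the ultraproduct is isomorphic to an ultrapower $\alg A^I/U$. For finite $\alg A$ of finite type, the diagonal embedding $\alg A \to \alg A^I/U$ is surjective: any $f\in A^I$ takes only finitely many values, hence is $U$-almost constant. So $\alg A^I/U \cong \alg A$, and therefore $\PPu(\vK) \subseteq \II(\vK)$. (Finite type is not even essential here, since the diagonal map is an isomorphism for finite structures in any signature; but it is harmless.)

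Then $\QQ(\vK) = \II\SU\PP\PPu(\vK) \subseteq \II\SU\PP\II(\vK) = \II\SU\PP(\vK) = \PR(\vK)$, which completes the proof. The only point needing care is the formula $\QQ(\vK)=\II\SU\PP\PPu(\vK)$ stated in the preliminaries, which we use as given.

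If one prefers an argument that avoids that formula, an alternative route uses Proposition~\ref{pr-qirr}(b),(a). Every nontrivial $\alg B\in\QQ(\vK)$ is a subdirect product of $\QQ(\vK)$-irreducible algebras. One would then have to show those irreducibles are finite and lie in $\II\SU(\vK)$, which is harder in general, so the ultraproduct argument is the one I would commit to. I expect no real obstacle beyond checking the ultraproduct reduction.
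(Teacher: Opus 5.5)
Your proof is correct, but it takes a different route from the paper's. You prove directly that $\PPu(\vK)\subseteq\II(\vK)$: the ultraproduct concentrates on a single $\alg A\in\vK$, and the diagonal embedding $\alg A\to\alg A^I/U$ is onto because every $f\in A^I$ is $U$-almost constant. You then substitute this into $\QQ(\vK)=\II\SU\PP\PPu(\vK)$.

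The paper instead goes through relatively irreducible algebras:
\begin{itemize}
\item It uses that every $\QQ(\vK)$-irreducible algebra lies in $\II\SU\PPu(\vK)$, so it has at most $n$ elements, where $n$ is the largest size of a member of $\vK$.
\item Proposition~\ref{pr-qirr}(b), which is where finite type is invoked, then puts these finite irreducibles into $\II\SU(\vK)$.
\item Proposition~\ref{pr-qirr}(a), together with closure of prevarieties under subdirect products, gives $\QQ(\vK)\subseteq\PR(\vK)$.
\end{itemize}

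The paper's cardinality bound silently relies on the same ultraproduct fact you prove, so your argument isolates the essential point. It is shorter and avoids both the relative subdirect representation and the relative J\'onsson-type lemma. As you note, it also shows that the finite-type hypothesis is unnecessary. The paper's version, in turn, matches the reduction-to-irreducibles strategy it uses elsewhere (e.g.\ Corollary~\ref{cor-countSI} and the strong primitivity criterion).

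One cosmetic point: if $\vK$ has distinct isomorphic members, the sets $\{i:\alg A_i\cong\alg A\}$ are not disjoint. In that case say that at least one of them lies in $U$, or partition the index set by equality $\alg A_i=\alg A$.
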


Since every finitely generated variety $\vV$ is generated by a single finite $\vV$-free algebra $\alg{F}$, the following corollary holds, and it yields the implication from left to right in Theorem~\ref{th-fatab}.

\begin{cor}\label{cor-tab}
	Let $\vV$ be a finitely generated variety of finite type. Then $\vV$ is SCpl if and only if it is SSCpl.
\end{cor}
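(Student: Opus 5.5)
The plan is to derive Corollary~\ref{cor-tab} from Theorem~\ref{th-tabular1} together with the fact that a finitely generated variety is generated by a finite free algebra.

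\emph{Easy direction.} Suppose $\vV$ is SSCpl, i.e.\ $\vV = \PR(\FrV)$. Since $\PR(\FrV) \subseteq \QQ(\FrV) \subseteq \vV$, we get $\vV = \QQ(\FrV)$, so $\vV$ is SCpl. This direction needs no finiteness hypotheses.

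\emph{Main direction.} Suppose $\vV = \VV(\alg A_1,\dots,\alg A_n)$ with each $\alg A_i$ finite, and that $\vV$ is SCpl. Replacing the generators by their product, we may assume $\vV = \VV(\alg A)$ with $\alg A$ finite of finite type. A standard fact (locally finite varieties; e.g.\ Burris--Sankappanavar) gives that the free algebra $\alg F := \alg F_{\vV}(m)$ with $m = |A|$ is finite. It also satisfies $\vV = \VV(\alg F)$: every equation failing in $\vV$ fails in $\alg A$ under some valuation using at most $m$ values, hence fails in $\alg F$.

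Next we show $\QQ(\FrV) = \QQ(\alg F)$ and $\PR(\FrV) = \PR(\alg F)$. One inclusion in each case is immediate, since $\alg F$ embeds into $\FrV$ (it is generated by $m$ of the free generators). For the other inclusion, $\FrV$ is a subdirect product of its finitely generated subalgebras' images. More simply: $\FrV \in \PR(\alg F)$, because for distinct $s,t \in \FrV$ the equation $s \app t$ fails in $\vV = \VV(\alg F)$. So some homomorphism $\FrV \to \alg F$ separates $s$ and $t$, and $\FrV$ embeds into a power of $\alg F$.

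Combining these facts, SCpl gives $\vV = \QQ(\FrV) = \QQ(\alg F)$. Theorem~\ref{th-tabular1} applied to $\vK = \{\alg F\}$ yields $\QQ(\alg F) = \PR(\alg F) = \PR(\FrV)$, so $\vV = \PR(\FrV)$, i.e.\ $\vV$ is SSCpl.

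The only substantive ingredient is Theorem~\ref{th-tabular1}, which we assume. The delicate point is the identification $\PR(\FrV) = \PR(\alg F)$, which rests on the residual separation argument above. Note that one does not even need $\alg F$ to be free: any finite $\alg F \in \II\SU(\FrV)$ generating $\vV$ would suffice.
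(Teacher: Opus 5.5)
Your proof is correct and follows the paper's own one-line argument: a finitely generated $\vV$ is generated by a finite free algebra $\alg F$, and Theorem~\ref{th-tabular1} applied to $\{\alg F\}$ gives $\QQ(\alg F)=\PR(\alg F)$. You also spell out the identifications $\QQ(\FrV)=\QQ(\alg F)$ and $\PR(\FrV)=\PR(\alg F)$, which the paper leaves implicit; the inclusion $\PR(\FrV)\subseteq\PR(\alg F)$ also follows at once from Corollary~\ref{cor-free}. You should delete the stray sentence about ``a subdirect product of its finitely generated subalgebras' images'', since your separation argument is what does the work.
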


The implication from left to right in Theorem~\ref{th-fatab} also follows from the following theorem together with the observation that, by Jónsson’s Lemma, congruence distributive varieties of finite type have a finite residual bound if and only if they are tabular.

\begin{theorem}\label{th-fmp}
	Let $\vv V$ be an SCpl congruence meet-semidistributive variety of finite type generated by finite algebras. Then $\vv V$ is SSCpl if and only if it has a finite residual bound.
\end{theorem}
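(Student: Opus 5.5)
We prove both directions of Theorem~\ref{th-fmp}; the forward direction carries the weight.

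\emph{Backward direction.} Suppose $\vv V$ has a finite residual bound. Then, since the type is finite, there are only finitely many SI algebras in $\vv V$ up to isomorphism, all of them finite. Let $\vv K$ be this finite set. Then $\vv V = \VV(\vv K)$, and $\vv V$ is generated by the finite algebra $\prod\vv K$. By Theorem~\ref{th-tabular1} applied to the single finite free algebra generating $\vv V$ (Corollary~\ref{cor-tab}), structural completeness yields
\[
\vv V = \QQ(\FrV) = \PR(\FrV),
\]
that is, $\vv V$ is SSCpl. I would include a short argument that the finitely generated free algebra $\alg F$ generates $\vv V$ as a variety: $\vv V$ is locally finite, so $\alg F_{\vv V}(n)$ is finite, and for $n$ large enough (at least the size of every SI member) every SI algebra is a homomorphic image of it.

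\emph{Forward direction, by contraposition.} Assume $\vv V$ has no finite residual bound. By Corollary~\ref{cor-KW}, $\vv V$ contains an infinite SI algebra $\alg A$. The goal is to find a $\kk$-quasiequation, with $\kk=\omega^+$, that is valid in $\FrV$ but fails in $\vv V$. That shows $\PR(\FrV)\neq\vv V$, and even $\QQ_{\omega^+}(\FrV)\neq\vv V$.

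\emph{Which sentence to use.} Two candidates are available.
\begin{enumerate}
\item[(1)] The bounding $\omega^+$-quasiequation $\mathfrak b_{\vv V}(\omega)$. This requires EDPC, which we do not have, so it is not directly available.
\item[(2)] A characteristic-style sentence. If $\vv V$ is generated by finite algebras, then $\FrV$ embeds into a product of finite SI algebras (by residual finiteness of free algebras in a locally finite, or finite-algebra-generated, variety). So it suffices to produce a $\kk$-quasiequation $\sq$ valid in every finite algebra of $\vv V$ but refuted in $\alg A$. Then $\sq$ holds in $\PR(\text{finite members})\supseteq\FrV$, hence in $\PR(\FrV)$, while $\alg A\notin\PR(\FrV)$.
\end{enumerate}
I will use (2).

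\emph{Constructing $\sq$.} In a congruence meet-semidistributive variety, Kearnes--Willard give a finite set of terms uniformly witnessing principal-congruence inclusion for a monolith pair in SI algebras, at least in the weak form needed here. I would use this to mimic $\pVk$. Take variables $x_j$ for $j<\omega$ together with $z,w$, and take premises asserting, via these terms, that $(z,w)$ lies in the principal congruence of every pair $(x_j,x_k)$. The conclusion is $z\app w$.

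\emph{Checking $\sq$.}
\begin{enumerate}
\item[(a)] In a finite algebra, pigeonhole forces $\nu(x_j)=\nu(x_k)$ for some $j\neq k$. The premises for that pair should then force $z\app w$, as in Theorem~\ref{th-infchar}(i).
\item[(b)] In $\alg A$, take distinct $a_j$ and the monolith pair for $(z,w)$. The monolith lies in every nontrivial principal congruence, so all premises hold while the conclusion fails.
\end{enumerate}

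\emph{Main obstacle.} The premises must be first-order (equational) and must imply $(c,d)\in\vt(a,b)$ in every algebra. Without EDPC I expect to pass to the finitely many premise schemes obtained by unfolding Mal'cev chains of bounded length. The bound on that length must hold uniformly over all SI algebras of $\vv V$. Establishing this uniform bound from the Kearnes--Willard analysis is the delicate point.

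\emph{Fallback.} If that fails, I would use (2) with $\sq$ equal to a characteristic $\omega^+$-quasiequation $\csq_{\alg B}$. Here $\alg B$ is a countable irreducible subalgebra of $\alg A$, obtained via Corollary~\ref{cor-CEPsubIrr} if CEP is available, and $\alg B$ is chosen so that it embeds into no finite member of $\vv V$. By Proposition~\ref{pr-char}, $\csq_{\alg B}$ is then valid in all finite members of $\vv V$, hence in $\FrV$, yet $\alg B\not\models\csq_{\alg B}$.
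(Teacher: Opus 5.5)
Your backward direction is the paper's argument: a finite residual bound and finite type make $\vv V$ finitely generated, and then Corollary~\ref{cor-tab} applies. The forward direction, however, is not completed under the stated hypotheses.

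\emph{The main line.} You require premises that force $(z,w)\in\vt(x_j,x_k)$ in \emph{every} algebra. This leads you to a uniform bound on Mal'cev chains over all SI members of $\vv V$. You do not prove such a bound, and Corollary~\ref{cor-KW} only supplies the infinite SI algebra $\alg A$, not any uniform description of principal congruences.

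\emph{The fallback.} It needs the CEP to obtain a countably infinite SI subalgebra $\alg B\le\alg A$, but the CEP is not a hypothesis of Theorem~\ref{th-fmp}. This fallback is precisely the paper's route. The paper invokes Corollary~\ref{cor-FASScore}, whose proof passes to a countable SI subalgebra via Proposition~\ref{pr-CEPirr} and then combines its characteristic $\omega^+$-quasiequation with Proposition~\ref{pr-fainfsi}. So the paper's proof carries the same unstated CEP assumption.

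\emph{The repair.} The missing idea is that no uniformity is needed. In (a) you only use that identifying $x_j$ with $x_k$ collapses $z$ and $w$ under the premises attached to the pair $(j,k)$. The premises only have to be satisfied in $\alg A$ under a single valuation, so you may tailor them to $\alg A$.

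For each $j<k$, fix one Mal'cev chain $c=e_0,\dots,e_n=d$ in $\alg A$ with $\{e_{i-1},e_i\}=\{t_i(a_j,\bar g_i),t_i(a_k,\bar g_i)\}$. Take as premises $y_{i-1}\app t_i(x_j,\bar u_i)$ and $y_i\app t_i(x_k,\bar u_i)$, swapping $x_j,x_k$ where the chain is oriented the other way. Here $y_0=z$, $y_n=w$, and the intermediate $y_i$ and the $\bar u_i$ are fresh variables for each pair. This has countably many variables and premises, so it is an $\omega^+$-quasiequation.

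Whenever $\nu(x_j)=\nu(x_k)$, each link collapses and $\nu(z)=\nu(w)$. By pigeonhole the sentence is therefore valid in every finite algebra, and hence in $\FrV\in\II\SU\PP(\text{finite members of }\vv V)$. On the other hand, the valuation $x_j\mapsto a_j$, $z\mapsto c$, $w\mapsto d$, $y_i\mapsto e_i$, $\bar u_i\mapsto\bar g_i$ refutes it in $\alg A$.

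This shows that $\vv V$ is not even $\omega^+$-SCpl, using neither EDPC nor CEP, and so it also removes the CEP dependence from the paper's argument. If you only want the failure of SSCpl, it is simpler still. An SI member of $\PR(\FrV)=\II\SU\PP(\FrV)$ embeds into $\FrV$, since some projection is injective on an SI subalgebra of a product. Proposition~\ref{pr-fainfsi} forbids this for infinite $\alg A$. Equivalently, use $\csq_{\alg A}$ itself as a $|\alg A|^+$-quasiequation.
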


\subsubsection*{Proof of Theorem \ref{th-fatab}}
\begin{proof}
	Let $\vK$ be a finite set of finite algebras of finite type, and let $\vv P = \PR(\vv K)$ and $\vv Q = \QQ(\vv K)$. Clearly, $\vv P \subseteq \vv Q$, and it remains to prove that $\vv Q \subseteq \vv P$.
	
	Suppose that $n$ is the maximal cardinality of algebras from $\vv K$. Then, since $\vv Q = \QQ(\vv K)$, every $\vv Q$-irreducible algebra belongs to $\II\SU\PP_u(\vv K)$, and hence every $\vv Q$-irreducible algebra has cardinality at most $n$ and is therefore finite.
	
	By Proposition~\ref{pr-qirr}.b, every finite $\vv Q$-irreducible algebra is in $\II\SU(\vv K)$. Let $\vv K'$ be the class of all $\vv Q$-irreducible algebras, and observe that $\vv K' \subseteq \PR(\vv K)$.
	
	By Proposition~\ref{pr-qirr}.a, every algebra in $\vv Q$ is a subdirect product of $\vv Q$-irreducible algebras; that is,
	\[
	\vv Q \subseteq \II\SU\PP(\vv K').
	\]
	Therefore,
	\[
	\vv Q \subseteq \II\SU\PP(\vv K') = \PR(\vv K') \subseteq \PR(\vv K),
	\]
	and hence $\vv Q = \PR(\vv K)$.
\end{proof}

\begin{remark}
	Theorem~\ref{th-tabular1} is an algebraic generalization of a theorem from \cite{LosSuszkoRemarks1958}, which states that every consequence relation determined by a finite matrix is finitary. This also follows from \cite[Theorem~2.3.1]{GorbunovBookE}.
\end{remark}

\begin{remark}
	If $\vv K$ is an arbitrary class of finite algebras, the prevariety $\PR(\vv K)$ need not be a quasivariety. For example, the prevariety of Heyting algebras generated by the class $\vK$ of all finite linearly ordered Heyting algebras is not a quasivariety. The class $\vK$ generates the variety $\vv{LC}$, and it is known that $\vv{LC}$ contains linearly ordered Heyting algebras of arbitrary cardinality.
	
	Thus, the Heyting algebra $\alg{C}_{\omega +2}$ of order type $\omega +2$ belongs to $\vv{LC}$. Note that $\alg{C}_{\omega +2}$ is SI, and for its characteristic $\kk$-quasiequation $\csq$ we have $\alg{C}_{\omega+2} \not\models \csq$ (cf.\ Example~\ref{ex-chains}), whereas, by Proposition~\ref{pr-char}, $\vv K \models \csq$, since all algebras in $\vv K$ are finite and $\alg{C}_{\omega+2}$ is infinite and does not embed into any algebra in $\vv K$. On the other hand, $\alg{C}_{\omega +2}$ is locally embeddable in $\vK$, and hence $\alg{C}_{\omega+2} \in \QQ(\vK)$. This shows that $\PR(\vv K) \subsetneq \QQ(\vv K)$.
\end{remark}

\subsubsection*{Proof of Theorem \ref{th-fmp}}

We start with the following simple observation.

\begin{prop}\label{pr-fainfsi}
	If the variety $\vv V$ is generated by its finite algebras, then $\alg F_\vv V(\omega)$ has no infinite SI subalgebras.
\end{prop}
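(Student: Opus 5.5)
The plan is to argue by contradiction, using residual finiteness of free algebras in varieties generated by finite algebras. Suppose $\alg B \le \alg F_\vv V(\omega)$ is an infinite SI subalgebra with monolith $\mu(\alg B)$, and fix a pair $(c,d)\in\mu(\alg B)$ with $c\neq d$.

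First I will show that $\alg F_\vv V(\omega)$ is residually finite. Let $\vv K$ be the class of finite algebras in $\vv V$, so that $\vv V=\VV(\vv K)$. Every $\vv V$-free algebra is free in $\VV(\vv K)$ and hence embeds into a product of members of $\vv K$; this follows from the standard construction $\alg F_\vv K(X)\le \prod \alg A$, where the product is taken over all maps of the generators into algebras $\alg A\in\vv K$ (cf. the proof that $\SU\PP(\vv K)$ contains the free algebras, \cite[Theorem~10.12]{BurrisSanka}). It follows that for any distinct $u,v\in \alg F_\vv V(\omega)$ there is a homomorphism $h\colon \alg F_\vv V(\omega)\to\alg A$ with $\alg A\in\vv K$ finite and $h(u)\neq h(v)$. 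Concretely, an equation $u\approx v$ that fails in $\vv V$ must fail in some finite member of $\vv K$, and the refuting valuation induces $h$ by freeness.

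Next, apply this to the pair $u=c$, $v=d$, obtaining $h\colon\alg F_\vv V(\omega)\to\alg A$ with $\alg A$ finite and $h(c)\ne h(d)$. Restrict $h$ to $\alg B$ and consider its kernel $\theta=\ker(h|_{\alg B})$, a congruence on $\alg B$ with $(c,d)\notin\theta$. Because every congruence on $\alg B$ other than the identity contains $\mu(\alg B)\ni(c,d)$, the congruence $\theta$ must be the identity. Hence $h|_{\alg B}$ is injective and $\alg B$ embeds into the finite algebra $\alg A$, which contradicts the assumption that $\alg B$ is infinite.

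The only step requiring care is the first, namely the separation of distinct elements of the free algebra by homomorphisms into finite members. Here "generated by its finite algebras" must be read as $\vv V=\VV(\vv K)$ with $\vv K$ consisting of finite algebras. Separation then follows because equations valid in $\vv K$ are valid in $\vv V$, so a non-identity $u\approx v$ is refuted in some finite $\alg A\in\vv K$. No finiteness of the type is needed for this step.
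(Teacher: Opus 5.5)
Your proposal is correct and takes the same route as the paper: you separate a pair $c\neq d$ from the monolith of the infinite SI subalgebra by a homomorphism of $\FrV$ into a finite algebra, and then observe that the restriction to that subalgebra would have to be injective, which is impossible. The only addition is your explicit justification of the separation step: a non-identity $t_u \approx t_v$ of $\vv V$ fails in some finite generating algebra, and freeness turns the refuting valuation into the required homomorphism. The paper states this step without proof.
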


\begin{proof}
	For a contradiction, suppose that $\vv V$ is generated by finite algebras and that $\alg A$ is an infinite SI subalgebra of $\alg F_\vv V(\omega)$. Then there exist two distinct elements $a,b \in \alg A \subseteq \alg F_\vv V(\omega)$ that belong to the monolith of $\alg A$.
	
	Since $\vv V$ is generated by finite algebras, for any two distinct elements $a,b \in \alg F_\vv V(\omega)$, there exists a homomorphism $f \colon \alg F_\vv V(\omega) \to \alg B$ into a finite algebra $\alg B$ such that $f(a) \neq f(b)$. The restriction $f|_{\alg A}$ of $f$ to $\alg A$ is a homomorphism from $\alg A$ into $\alg B$, and $f|_{\alg A}(a) \neq f|_{\alg A}(b)$. Since $\alg A$ is infinite and $\alg B$ is finite, the homomorphism $f|_{\alg A}$ is not injective, which contradicts the fact that $a$ and $b$ lie in the monolith of $\alg A$.
\end{proof}


\begin{cor}\label{cor-FASScore}
	If a variety $\vv V$ of finite type with the CEP is generated by finite algebras, then $\vV^{\omega^+}$ contains no infinite SI algebras. 
\end{cor}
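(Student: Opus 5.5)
Suppose, for a contradiction, that $\vV^{\omega^+} = \QQ_{\omega^+}(\FrV)$ contains an infinite SI algebra $\alg A$. The plan is to push $\alg A$ down to a countable SI algebra, use the characteristic quasiequation to embed it into a power of $\FrV$, and then project onto a single coordinate of that power. Proposition~\ref{pr-fainfsi} rules this out.

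First I would reduce to the countable case. Since $\vV^{\omega^+} \subseteq \vv V$ and $\vv V$ has the CEP, the algebra $\alg A$ has the CEP. Being SI in $\vv V$, it is $\vv V$-irreducible. By Corollary~\ref{cor-CEPsubIrr} (second part, with $\lm' = \omega$) there is a countably infinite SI subalgebra $\alg B \in \II\SU(\alg A)$. Its irreducibility comes from Proposition~\ref{pr-CEPirr}, relative to the class $\vv V$. Since $\vV^{\omega^+}$ is a prevariety, $\alg B \in \vV^{\omega^+}$.

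Next I would use the characteristic $\omega^+$-quasiequation $\csq_{\alg B}$, taken with $\vv K = \vv V$. It has countably many variables and premises, so it is an $\omega^+$-quasiequation, and $\alg B \not\models \csq_{\alg B}$. Now $\vV^{\omega^+}$ is the smallest $\omega^+$-quasivariety containing $\FrV$, and it contains $\alg B$. Therefore $\FrV \not\models \csq_{\alg B}$, for otherwise the class defined by $\csq_{\alg B}$ would be an $\omega^+$-quasivariety containing $\FrV$ and omitting $\alg B$. By Proposition~\ref{pr-char}, applied with the class $\vv V$ (which contains $\FrV$), we get $\alg B \in \II\SU(\FrV)$. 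So $\FrV$ has an infinite SI subalgebra, contradicting Proposition~\ref{pr-fainfsi}.

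The one point to check is that Proposition~\ref{pr-char} gives genuine embeddability rather than embeddability into a power. It does: the refuting valuation yields a homomorphism $\alg B \to \FrV$ that separates the monolith pair. Any nontrivial kernel would have to contain the monolith, so this homomorphism is injective. This is exactly the step where an infinitary (countable-premise) quasiequation is needed: an ordinary finite quasiequation could not encode the whole infinite algebra $\alg B$. Finite type is used twice, once for the countability of $\alg B$ and once for $\csq_{\alg B}$ having only countably many premises.
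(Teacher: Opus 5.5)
Your proof is correct and follows the paper's argument. You use the CEP to pass to a countable SI subalgebra $\alg B$, its characteristic $\omega^+$-quasiequation together with Proposition~\ref{pr-char} to embed $\alg B$ into $\FrV$, and Proposition~\ref{pr-fainfsi} for the contradiction. Invoking the second part of Corollary~\ref{cor-CEPsubIrr} to make $\alg B$ countably \emph{infinite} is slightly more careful than the paper, which only says countable, though a finite SI subalgebra would give no contradiction. Your opening mention of embedding into a power and projecting is superfluous, since Proposition~\ref{pr-char} already yields a direct embedding.
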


\begin{proof}
	For a contradiction, assume that $\alg{A} \in \vV^{\omega^+} = \QQ_{\omega^+}(\FrV)$ is an infinite SI algebra. Then, by Proposition~\ref{pr-CEPirr}, $\alg A$ contains a countable SI subalgebra $\alg{B}$, and $\alg B \in \QQ_{\omega^+}(\FrV)$. Let $\csq$ be the characteristic $\omega^+$-quasiequation of $\alg{B}$. Then $\alg{B} \not\models \csq$, and since $\alg B \in \QQ_{\omega^+}(\FrV)$, it follows that $\FrV \not\models \csq$. Thus, by Proposition~\ref{pr-char}, $\alg B$ is embeddable into $\FrV$, contradicting the assumption.
\end{proof}

Now we are in a position to prove Theorem~\ref{th-fmp}.

\begin{proof} 
	Suppose that $\vv V$ is an SCpl congruence meet-semidistributive variety of finite type generated by finite algebras.
	
	If $\vv V$ has a finite residual bound, then, since its type is finite, $\vv V$ has only finitely many finite SI algebras up to isomorphism; hence it is finitely generated. Since, by assumption, $\vV$ is SCpl, it follows from Corollary~\ref{cor-tab} that it is SSCpl.
	
	Conversely, suppose that $\vv V$ is an SCpl congruence meet-semidistributive variety of finite type that does not have a finite residual bound. Then SCpl implies that $\vv V = \vV^\omega$, and therefore all SI algebras in $\vv V$ belong to $\vV^\omega$. On the other hand, since $\vv V$ is congruence meet-semidistributive, of finite type, and does not have a finite residual bound, by Corollary~\ref{cor-KW}, $\vv V$ contains an infinite SI algebra $\alg A$. Hence $\alg A \in \vv V = \vV^\omega$, but by Corollary~\ref{cor-FASScore}, $\alg A \notin \vV^{\omega^+}$. Therefore, $\VP \subseteq \vV^{\omega^+} \subsetneq \vv V$, and $\vv V$ is not SSCpl. This completes the proof of Theorem~\ref{th-fmp}.
\end{proof}

Recall from \cite{BlkPgz1} that varieties with EDPC are congruence distributive and have the CEP. Hence, we have the following.

\begin{cor}\label{cor-Wradm}
	Suppose that $\vv V$ is a variety of finite type with EDPC generated by finite algebras. Then $\WsV(\omega^+)$ is admissible for $\vv V$. 
\end{cor}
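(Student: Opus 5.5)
The plan is to show that the bounding quasiequation $\WsV(\omega^+)$ is valid in $\FrV$. By the discussion following the definition of admissibility, a $\kk$-quasiequation is admissible in $\vv V$ exactly when it is valid in $\FrV$, so this is all that is needed. First a check that $\WsV(\omega^+)$ is a legitimate $\omega^+$-quasiequation: its antecedent $\set{E(x_j,x_k,z,w)}{0 \le j < k < \omega}$ is countable, since $E$ is a finite set of equations. It involves countably many variables $x_j$ together with $z,w$. Both counts are $<\omega^+$.

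Suppose, for a contradiction, that some valuation $\nu$ in $\FrV$ refutes $\WsV(\omega^+)$. Then $c=\nu(z)\neq d=\nu(w)$, and $E(\nu(x_j),\nu(x_k),c,d)$ holds for all $j<k<\omega$. By the EDPC condition \eqref{ex-EDPC}, this means $(c,d)\in\vt(\nu(x_j),\nu(x_k))$ for all $j<k$. In particular the elements $a_j=\nu(x_j)$ are pairwise distinct: if $a_j=a_k$, the principal congruence would be the identity, forcing $c=d$.

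Next I use that $\vv V$ is generated by finite algebras, arguing as in Proposition~\ref{pr-fainfsi}. Since $c\neq d$ in $\FrV$, there is a homomorphism $f\colon \FrV\to\alg B$ onto a finite algebra $\alg B$ with $f(c)\neq f(d)$. Because $\alg B$ is finite and the $a_j$ are infinitely many, there exist $j<k$ with $f(a_j)=f(a_k)$. Equations are preserved by homomorphisms, so $\alg B\models E(f(a_j),f(a_k),f(c),f(d))$. Applying \eqref{ex-EDPC} in $\alg B\in\vv V$ gives $(f(c),f(d))\in\vt_{\alg B}(f(a_j),f(a_k))$. This principal congruence is the identity, so $f(c)=f(d)$, a contradiction. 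Hence $\FrV\models\WsV(\omega^+)$, and the quasiequation is admissible.

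Alternatively, one can bypass the free algebra and invoke Proposition~\ref{pr-adm}. Let $\vv K$ be the class of finite algebras of $\vv V$; then $\vv V=\VV(\vv K)$. By Theorem~\ref{th-infchar}(i) with $\kk=\omega$ (here $\WsV(\omega)$ is the displayed rule indexed by $j<k<\omega$), this rule is valid in every finite algebra of $\vv V$. Proposition~\ref{pr-adm} then yields admissibility directly. The only delicate point is the bookkeeping of the index in $\WsV(\cdot)$: the rule labelled $\WsV(\omega^+)$ must be read as the one with countably many $x_j$, i.e.\ as an $\omega^+$-quasiequation. Neither argument uses the CEP or congruence distributivity beyond EDPC itself; the main ingredient is the homomorphism to a finite algebra, or equivalently Proposition~\ref{pr-adm}.
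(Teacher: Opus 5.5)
Your proof is correct, but it takes a more elementary route than the paper. The paper derives the corollary from Corollary~\ref{cor-FASScore}. Using the CEP supplied by EDPC, a countable SI subalgebra (Proposition~\ref{pr-CEPirr}), the characteristic quasiequation (Proposition~\ref{pr-char}) and Proposition~\ref{pr-fainfsi}, it shows that $\vV^{\omega^+}$ has no infinite SI members. It then infers $\vV^{\omega^+}\models\WsV(\omega^+)$ and specializes to $\FrV\in\vV^{\omega^+}$.

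You instead check the rule in $\FrV$ directly. A refuting pair $c\neq d$ is separated by a homomorphism onto a finite member of $\vv V$, where pigeonhole and EDPC force $f(c)=f(d)$. Equivalently, the rule holds in every finite member by Theorem~\ref{th-infchar}(i), and Proposition~\ref{pr-adm} transfers this to $\FrV$. Your version needs neither the CEP nor characteristic quasiequations. It does not even need the pairwise distinctness of the $a_j$, since pigeonhole acts on the indices.

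Your route also avoids a step the paper leaves implicit. Passing from ``no infinite SI algebras in $\vV^{\omega^+}$'' to validity of the rule throughout $\vV^{\omega^+}$ would require SI quotients of members of $\vV^{\omega^+}$ to remain in $\vV^{\omega^+}$. A prevariety does not guarantee this, since it is not closed under $\HH$. That validity does follow from your result, because $\vV^{\omega^+}$ is the $\omega^+$-quasivariety generated by $\FrV$. What the paper's route offers is a link between admissibility of the bounding rule and the structural fact about $\vV^{\omega^+}$ that drives Theorem~\ref{th-fmp}.

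Your remark on indexing is also right. By definition $\WsV(\kk)$ has $\kk$ variables $x_j$ and is a $\kk^+$-quasiequation, so the intended countable rule is literally $\WsV(\omega)$. Read literally, $\WsV(\omega^+)$ has $\omega_1$ variables $x_j$, and it already holds in the countable algebra $\FrV$ by Theorem~\ref{th-infchar}(i).
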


\begin{proof}
	By Corollary~\ref{cor-FASScore}, $\vV^{\omega^+}$ contains no infinite SI algebras. Hence, $\vV^{\omega^+} \models \WsV(\omega^+)$, and since $\FrV \in \vV^{\omega^+}$, we have $\FrV \models \WsV(\omega^+)$; that is, $\WsV(\omega^+)$ is admissible.
\end{proof}

Thus, for all non-finitely generated varieties of finite type with EDPC that are generated by finite algebras, $\WsV(\omega^+)$ is an admissible but not derivable $\omega^+$-quasiequation.

\begin{cor}\label{cor-EDPC}
	None of the non-finitely generated varieties of finite type with EDPC generated by finite algebras are $\omega^+$-structurally complete; hence, none of them are SSCpl. 
\end{cor}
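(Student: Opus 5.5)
The plan is to show that $\WsV(\omega^+)$ is admissible but not derivable in $\vv V$. This gives $\vV^{\omega^+} \subsetneq \vv V$, so $\vv V$ is not $\omega^+$-SCpl. The inclusions $\VP \subseteq \vV^{\kk} \subseteq \vV^{\omega^+}$ for $\kk \geq \omega^+$, noted after Definition~\ref{def-SCpl}, then show that $\vv V$ is not SSCpl. Throughout, $\vv V$ is a non-finitely generated variety of finite type with EDPC that is generated by finite algebras.

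The first step is admissibility, which is exactly Corollary~\ref{cor-Wradm}. That corollary applies because EDPC gives the CEP, so Corollary~\ref{cor-FASScore} is available and $\vV^{\omega^+}$ contains no infinite SI algebra. By Theorem~\ref{th-infchar}(i) with $\kk=\omega$, every finite algebra of $\vv V$ validates $\WsV(\omega)$. By Theorem~\ref{th-infchar}(ii), every infinite SI algebra of $\vv V$ refutes it. Since every member of $\vV^{\omega^+}$ is a subdirect product of SI algebras in $\VV(\vV^{\omega^+})=\vv V$, some care is needed here. It is simplest to use admissibility in the form $\FrV \models \WsV(\omega)$ and argue directly, as in Proposition~\ref{pr-fainfsi}. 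Suppose $\nu$ is a valuation in $\FrV$ satisfying all premises with $\nu(z)\ne\nu(w)$. Separate $\nu(z),\nu(w)$ by a homomorphism $f$ onto a finite algebra. In that finite algebra two of the $f\nu(x_j)$ coincide, and EDPC forces $f\nu(z)=f\nu(w)$, a contradiction. So $\WsV(\omega)$ (which is the same as $\WsV(\omega^+)$ in the paper's indexing) is valid in $\FrV$ and hence admissible.

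The second step is non-derivability, and this is the main obstacle: we need an infinite SI algebra in $\vv V$. Since $\vv V$ is not finitely generated, is congruence distributive (by EDPC), and has finite type, Jónsson's Lemma shows that $\vv V$ has no finite residual bound. Otherwise only finitely many SI algebras would exist up to isomorphism, their product would generate $\vv V$, and $\vv V$ would be finitely generated. Corollary~\ref{cor-KW} (Kearnes--Willard) then yields an infinite SI algebra $\alg A \in \vv V$, and by Corollary~\ref{cor-CEPsubIrr} we may take $\alg A$ countable. By Theorem~\ref{th-infchar}(ii), $\alg A \not\models \WsV(\omega)$, so the bounding quasiequation is not derivable in $\vv V$.

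Combining the two steps, $\alg A \in \vv V\setminus \vV^{\omega^+}$. This is also immediate from Corollary~\ref{cor-FASScore}. Hence $\vv V \ne \QQ_{\omega^+}(\FrV)$, and therefore $\vv V\ne\PR(\FrV)$, which proves the corollary.
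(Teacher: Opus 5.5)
Your proof is correct and follows the paper's route. The bounding quasiequation is admissible (Corollary~\ref{cor-Wradm}), but it is refuted by an infinite SI algebra, whose existence comes from the absence of a finite residual bound together with Kearnes--Willard, exactly as in the proof of Theorem~\ref{th-fmp}.

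Your direct check that $\FrV \models \WsV(\omega)$ via finite quotients is a cleaner justification than the paper's appeal to Corollary~\ref{cor-FASScore}. Working with $\WsV(\omega)$, which has countably many $x_j$ and so is an $\omega^+$-quasiequation, is the right reading of the paper's $\WsV(\omega^+)$. One small point: the step from non-finite generation to having no finite residual bound needs only Birkhoff's subdirect representation theorem, not Jónsson's Lemma.
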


\begin{remark}
	Theorem~\ref{th-fmp} generalizes \cite[Corollary~1]{RautenbergNote1985}.
\end{remark}

\begin{remark}
	Observe that, in general, subquasivarieties of a variety generated by finite algebras need not themselves be generated by finite algebras, even if the subquasivariety is SCpl. Indeed, the variety of all Heyting algebras is generated by finite algebras, whereas its structural core, the quasivariety $\QQ(\alg{F}_{\HA}(\omega))$, is not (cf.\ \cite[Corollary~6.3.11]{RybakovBook}).
\end{remark}

\subsubsection*{Some applications} 

Corollary~\ref{cor-EDPC} has several immediate consequences.

There is a well-known correspondence between intermediate propositional logics and varieties of Heyting algebras, and between normal extensions of the logic $S4$ and varieties of interior algebras; these correspondences preserve structural completeness and strong structural completeness. Recall that the variety $\HA$ and the variety of interior algebras have EDPC.

\begin{example}
	It is known that Dummett logic $LC$ is structurally complete, and so is the corresponding variety $\vv{LC}$ of Heyting algebras generated by linearly ordered algebras (see, e.g., \cite{ChagrovZakh}). The variety $\vv{LC}$ is locally finite and contains infinite SI algebras; for example, the Heyting algebra of order type $\omega+2$. Thus, by Corollary~\ref{cor-EDPC}, variety $\vv{LC}$, as well as logic $LC$, is not strongly structurally complete. Moreover, $\vv{LC}$ is not even $\omega^+$-structurally complete.
\end{example}

\begin{example}
	It was observed in \cite{PrucnalStructural1976} that Medvedev logic $ML$, and consequently the corresponding variety of Heyting algebras $\vv{ML}$, is structurally complete, and $\vv{ML}$ is generated by finite algebras (see, e.g., \cite{ChagrovZakh}). By Corollary~\ref{cor-EDPC}, variety $\vv{ML}$, as well as logic $ML$, is not strongly structurally complete, and is not even $\omega^+$-structurally complete.
\end{example}

\begin{example}
	The variety of Brouwerian algebras (implicative semilattices) is locally finite (see, e.g., \cite{KohlerBrouwerian1981}); hence any variety of Brouwerian algebras containing an infinite SI algebra is neither strongly structurally complete nor $\omega^+$-structurally complete.
\end{example}

\begin{example}
	By a similar argument, logic $S4.3$ is not strongly structurally complete (infinitary structural completeness of $S4.3$ and its extensions was studied in depth in~\cite{DzikWojtylakAlmost2015}).
\end{example}

\section{Strongly primitive varieties}\label{sec-primitive}

In this section we discuss how hereditary structural completeness and hereditary strong structural completeness are related.

\begin{definition}
	Let $\vV$ be a variety and $\kk$ be an infinite cardinal. Then,
	\begin{enumerate}
		\item[] $\vv V$ is \Def{hereditarily structurally complete}, or \Def{primitive}, if $\vv V$ and all its subvarieties are SCpl;
		
		\item[] $\vv V$ is \Def{hereditarily $\kk$-strongly structurally complete}, or \Def{$\kk$-primitive}, if $\vv V$ and all its subvarieties are $\kk$-structurally complete;
		
		\item[] $\vv V$ is \Def{hereditarily strongly structurally complete}, or \Def{strongly primitive}, if $\vv V$ and all its subvarieties are SSCpl.
	\end{enumerate}
\end{definition}

Theorem~\ref{th-fatab} yields the following.

\begin{cor}\label{cor-superpr}
	If $\vv V$ is a congruence distributive HSCpl variety of finite type, every subvariety of which is generated by finite algebras (in particular, if $\vv V$ is locally finite), then the finitely generated subvarieties of $\vv V$, and only those, are HSSCpl.
\end{cor}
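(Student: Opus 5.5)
The plan is to reduce Corollary~\ref{cor-superpr} to Theorem~\ref{th-fatab}, applied to each subvariety separately. Let $\vv V$ be as in the statement and let $\vv W \subseteq \vv V$ be a subvariety. Every subvariety of $\vv V$ inherits from $\vv V$ the following properties: congruence distributivity, the type $\Omega$ (hence finite type), being generated by finite algebras (this holds by hypothesis for every subvariety of $\vv V$), and structural completeness (because $\vv V$ is HSCpl, every subvariety of $\vv W$ is a subvariety of $\vv V$). So Theorem~\ref{th-fatab} applies to every subvariety of $\vv W$. In the locally finite case, I will simply note that any locally finite variety is generated by its finite algebras, namely its finitely generated members, so the parenthetical hypothesis is a special case.

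First direction: suppose $\vv W$ is finitely generated, i.e.\ tabular. Every subvariety $\vv U$ of $\vv W$ is then also finitely generated. For congruence distributive varieties this follows from Jónsson's Lemma: the SI members of $\vv W$ lie in $\HH\SU(\alg A)$ for a finite generator $\alg A$, so there are finitely many of them up to isomorphism, all finite. Then $\vv U$ is generated by the finite product of its own finite SI members. Since $\vv U$ is SCpl and finitely generated of finite type, Corollary~\ref{cor-tab} shows that $\vv U$ is SSCpl. Hence $\vv W$ is HSSCpl.

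Second direction: suppose $\vv W$ is HSSCpl. In particular $\vv W$ is SSCpl, and therefore $\kk$-SCpl for every infinite $\kk$, in particular for $\kk = \omega^+$. By Theorem~\ref{th-fatab} applied to $\vv W$ with $\kk = \omega^+ > \omega$, the variety $\vv W$ is tabular, i.e.\ finitely generated.

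I expect no serious obstacle. The only points needing care are that every hypothesis of Theorem~\ref{th-fatab} passes to subvarieties, and that subvarieties of a finitely generated congruence distributive variety are again finitely generated, which is exactly where Jónsson's Lemma enters. As a consistency check, I will also confirm that the notions HSCpl and HSSCpl in the statement match \emph{primitive} and \emph{strongly primitive} as defined just above.
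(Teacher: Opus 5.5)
Your proposal is correct and is essentially the paper's argument, which simply derives the corollary from Theorem~\ref{th-fatab} applied to each subvariety. The details you supply are exactly what the paper's one-line derivation leaves implicit: Jónsson's Lemma shows that subvarieties of a tabular congruence-distributive variety are again tabular, and Corollary~\ref{cor-tab} yields full strong structural completeness rather than only $\kappa$-structural completeness.
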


It follows from \cite{Citkin1978} that all primitive varieties of Heyting algebras are locally finite. Hence, by Corollary~\ref{cor-superpr}, only tabular primitive varieties are strongly primitive. In particular, the following holds.

\begin{theorem}[{\cite[Section~4]{Citkin1978}}]
	A variety of Heyting algebras is strongly primitive if and only if it is primitive and omits the Heyting algebra of order type $\omega+2$.
\end{theorem}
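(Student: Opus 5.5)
We argue both directions separately, using that every variety of Heyting algebras has EDPC (hence is congruence distributive with the CEP) and is of finite type, and that by \cite{Citkin1978} every primitive variety of Heyting algebras is locally finite. Thus Corollary~\ref{cor-superpr} applies: a primitive variety $\vv V$ of Heyting algebras is strongly primitive if and only if it is finitely generated, i.e. tabular. Since a strongly primitive variety is in particular primitive (SSCpl implies SCpl for every subvariety), it remains to show that, for a primitive $\vv V$,
\[
\vv V \text{ is tabular} \iff \alg C_{\omega+2} \notin \vv V .
\]

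For the direction from left to right, suppose $\vv V$ is tabular, so $\vv V = \VV(\alg A)$ with $\alg A$ finite. By Jónsson's Lemma every SI algebra of $\vv V$ lies in $\HH\SU(\alg A)$ and is finite. Since $\alg C_{\omega+2}$ is SI (Example~\ref{ex-chains}) and infinite, it cannot belong to $\vv V$.

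For the converse, suppose $\vv V$ is primitive and not tabular; we produce $\alg C_{\omega+2}$ in $\vv V$. Being locally finite and not finitely generated, $\vv V$ has finite SI members of unbounded size. Indeed, if their sizes were bounded, then up to isomorphism there would be finitely many of them, and they would generate $\vv V$ as a finitely generated variety. Corollary~\ref{cor-KW} then gives an infinite SI algebra in $\vv V$, so $\vv V$ is not of finite depth or width in a bounded way.

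The key step, and the main obstacle, is to turn this into chains. I would use the characterization from \cite{Citkin1978} of primitive varieties of Heyting algebras by a finite list of forbidden finite SI algebras. The aim is to show that in a primitive variety the finite SI algebras have bounded width and bounded "branching", so that unbounded size forces unbounded depth. This would then yield arbitrarily long finite chains $\alg C_n$ among the SI algebras of $\vv V$.

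Granting the chains, $\vv{LC} = \VV(\{\alg C_n\}) \subseteq \vv V$, and $\alg C_{\omega+2} \in \vv{LC}$ because it is locally embeddable into finite chains, hence lies in $\QQ(\{\alg C_n\})$. If the structural description of primitive varieties in \cite{Citkin1978} is not directly available in this form, I would instead try the following. Take an infinite SI $\alg B \in \vv V$ and extract from it, using the pre-top element and local finiteness, an increasing $\omega$-sequence together with the top and pre-top. I would then argue via admissibility of $\bb$ that primitivity forces the generated subalgebra to be $\alg C_{\omega+2}$.
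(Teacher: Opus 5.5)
\paragraph{Verdict: the converse direction is missing.} Your reduction is correct and is the one the paper uses. Strong primitivity implies primitivity. Primitive varieties of Heyting algebras are locally finite by \cite{Citkin1978}. Corollary~\ref{cor-superpr} then makes strong primitivity equivalent to tabularity, and a tabular variety omits $\alg C_{\omega+2}$ by J\'onsson's Lemma.

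The gap is the converse: a primitive non-tabular variety must contain $\alg C_{\omega+2}$, that is, it must have SI members of unbounded depth. This is the whole content of the statement beyond Corollary~\ref{cor-superpr}. The paper does not derive it either; it rests on Citkin's analysis of primitive varieties, cited as \cite[Section~4]{Citkin1978}. You neither prove it nor cite it. You announce ``bounded width and branching'' as an aim, and nothing you actually establish uses primitivity beyond local finiteness.

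\paragraph{Why local finiteness alone is not enough.} Local finiteness plus non-tabularity, even with the infinite SI algebra from Corollary~\ref{cor-KW}, does not give long chains. Take the variety generated by all finite forks, i.e.\ the Boolean algebras $2^n$ with a new top adjoined. It is locally finite and non-tabular. It contains the infinite SI algebra $\mathcal P(\omega)$ with a new top. Yet all its members have depth at most two, so it omits the four-element chain. Your sentence ``so $\vv V$ is not of finite depth or width in a bounded way'' is therefore a non sequitur.

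What excludes such varieties is primitivity. For example, $2^3$ with a new top does not embed into any free algebra of the variety it generates, so it cannot lie in a primitive variety. Dually: once the two-pronged fork is present, every pair of maximal points in the dual of a free algebra has a point below exactly that pair. No p-morphism onto the three-pronged fork can accommodate such a point. An argument of this kind, or Citkin's list of forbidden algebras, has to be carried out at every depth. Its conclusion must be that finite SI algebras of bounded depth in a primitive variety have bounded size. Only then does unbounded size force unbounded depth, and the depth map onto finite chains gives you $\vv{LC}\subseteq\vv V$.

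\paragraph{The fallback via the bounding rule fails.} By Corollary~\ref{cor-Wradm}, the bounding rule is admissible in every variety of Heyting algebras generated by finite algebras, including the fork variety above. So its admissibility carries no information that could force $\alg C_{\omega+2}$ into $\vv V$. Moreover, an increasing $\omega$-sequence in an infinite SI algebra, together with the top and pre-top, need not generate a chain, because the elements $a_j\to a_i$ leave the sequence. In $\mathcal P(\omega)$ with a new top, take $a_i=\{0,\dots,i\}$. Then for $i<j$ we get $a_j\to a_i=(\omega\setminus a_j)\cup a_i$, which is cofinite and incomparable with the later $a_k$.
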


The $\{\to\}$, $\{\to, \land\}$, and $\{\to,\land,\bot\}$-fragments of intuitionistic propositional logic are HSCpl and are known to be locally tabular. Thus, the corresponding varieties are primitive and locally finite. By Corollary~\ref{cor-superpr}, all tabular subvarieties of these varieties, and only those, are strongly primitive. 

\begin{remark}
	Hereditary structural completeness in the infinitary sense for the $\{\to\}$ and $\{\to, \land\}$-fragments was observed and studied in \cite{PrucnalStructural1983,PrucnalStructural1985,WojtylakSyntactical1990, WojtylakStructural1991}. 
\end{remark}

\begin{example}
	Denote by $\vv {K4}$ the variety of modal algebras corresponding to the logic $K4$. 
	Recall (see, e.g., \cite[Section~5]{RybakovBook}) that all primitive subvarieties of $\vv {K4}$ are locally finite. By Corollary~\ref{cor-superpr}, the tabular subvarieties of $\vv {K4}$, and only those, are strongly primitive.
\end{example}

\subsubsection*{Criterion of strong primitivity}
Similarly to the case of primitivity for locally finite quasivarieties, we have the following.

\begin{theorem}
	A variety $\vv V$ of finite type with the CEP is strongly primitive if and only if all its countable SI algebras are weakly $\vv V$-projective.
\end{theorem}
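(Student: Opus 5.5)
We prove the two directions separately. Throughout, two earlier facts do the work: a prevariety is determined by its irreducible members (Proposition~\ref{pr-qirr}(a), Corollary~\ref{cor-countSI}), and characteristic $\kk$-quasiequations detect embeddability (Proposition~\ref{pr-char}).

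\emph{($\Leftarrow$)} Assume every countable SI algebra of $\vv V$ is weakly $\vv V$-projective. Let $\vv W \subseteq \vv V$ be any subvariety; we show $\vv W = \PR(\alg F_{\vv W}(\omega))$. Suppose instead that $\vv P := \PR(\alg F_{\vv W}(\omega)) \subsetneq \vv W$. Since $\vv W$ is a variety, the $\vv W$-irreducible algebras are exactly the SI ones, so Corollary~\ref{cor-countSI} gives a countable SI algebra $\alg A \in \vv W \setminus \vv P$. A countable algebra is a homomorphic image of the free algebra on countably many generators, so $\alg A \in \HH(\alg F_{\vv W}(\omega))$. Since $\alg A$ is also SI in $\vv V$ and $\alg F_{\vv W}(\omega) \in \vv V$, weak $\vv V$-projectivity gives $\alg A \in \II\SU(\alg F_{\vv W}(\omega)) \subseteq \vv P$. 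This is a contradiction. The CEP is not needed in this direction.

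\emph{($\Rightarrow$)} Assume $\vv V$ is strongly primitive. Let $\alg A$ be a countable SI algebra and $\alg A \in \HH(\alg B)$ with $\alg B \in \vv V$; we must show $\alg A \in \II\SU(\alg B)$.

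1. Let $\vv W = \VV(\alg B)$. Then $\alg A \in \vv W$, and $\vv W$ is SSCpl by hypothesis, so $\alg A \in \PR(\alg F_{\vv W}(\omega))$.

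2. Since $\alg A$ is SI and countable, and the type is finite, the characteristic $\omega^+$-quasiequation $\csq_{\alg A}$ exists and $\alg A \not\models \csq_{\alg A}$. Hence $\alg F_{\vv W}(\omega) \not\models \csq_{\alg A}$, and Proposition~\ref{pr-char} (applied in the class $\vv W$, where the $\vv W$-monolith is the usual monolith) yields $\alg A \in \II\SU(\alg F_{\vv W}(\omega))$.

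3. It remains to pass from the free algebra $\alg F_{\vv W}(\omega)$ to $\alg B$ itself. This is the main obstacle: $\alg A$ embeds into $\alg F_{\vv W}(\omega) \in \SU\PP(\alg B)$, but not obviously into $\alg B$.

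I would handle step 3 using the CEP as follows. Being SI, $\alg A$ is also irreducible in any class containing it. Write $\alg F_{\vv W}(\omega)$ as a subdirect product of copies of subalgebras of $\alg B$, and look at the image of $\alg A$ under the projections. Since $(c,d)$ lies in the monolith of $\alg A$, some projection $\pi$ separates $c$ and $d$. Then $\ker(\pi|_{\alg A})$ omits the monolith pair, so it is the identity congruence, and $\pi|_{\alg A}$ embeds $\alg A$ into a subalgebra of $\alg B$.

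If this works, the CEP is not even needed. So I would check it carefully: it requires $\alg F_{\vv W}(\omega) \in \II\SU\PP(\alg B)$. This holds because the free algebra of $\VV(\alg B)$ lies in $\SU\PP(\alg B)$, by the standard construction of free algebras. If instead one works with a class $\vv K$ of generators, the CEP would be used via Corollary~\ref{cor-CEPsubIrr} to reduce to countable subalgebras.
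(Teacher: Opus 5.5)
Your $(\Rightarrow)$ direction is correct. It is the paper's argument in structural rather than syntactic form. The paper shows that $\csq_{\alg A}$ is admissible but not derivable in $\VV(\alg B)$. You instead use $\alg A\in\PR(\alg F_{\vv W}(\omega))\subseteq\II\SU\PP(\alg B)$, together with the fact that an SI member of $\II\SU\PP(\alg B)$ embeds into $\alg B$; your step~2 is not even needed.

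The gap is the first step of $(\Leftarrow)$. There you take a \emph{countable} SI algebra in $\vv W\setminus\vv P$ from Corollary~\ref{cor-countSI}. The proof of that corollary only produces an irreducible algebra outside $\vv P$, with no control on its size, and the countability clause is false in general. Take $\vv Q=\HA$ and let $\vv P$ be the prevariety generated by all countable Heyting algebras. Every SI member of $\vv P$ embeds into a countable algebra, so an uncountable SI Heyting chain lies in $\vv Q\setminus\vv P$. Yet $\vv Q\setminus\vv P$ contains no countable algebra at all.

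In your situation this is not a technicality; it is the entire content of the direction.
\begin{itemize}
\item Under the hypothesis, every countable SI algebra of $\vv W$ is a homomorphic image of $\alg F_{\vv W}(\omega)\in\vv V$. It therefore embeds into $\alg F_{\vv W}(\omega)$ and lies in $\vv P$. So every SI witness to $\vv W\neq\vv P$ is uncountable.
\item Conversely, the type is finite, so $\alg F_{\vv W}(\omega)$ is countable and every SI member of $\vv P$ embeds into it. Hence every uncountable SI member of $\vv W$ is such a witness.
\end{itemize}
What must be shown is therefore that the hypothesis, together with the CEP and finite type, rules out uncountable SI algebras in $\vv V$. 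Your remark that the CEP is not needed here is a symptom of having skipped this.

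The paper tries to handle this step with the CEP. It takes an arbitrary SI $\alg A\in\vv W\setminus\vv P$ and a $\kk$-quasiequation $\sq$ with $\vv P\models\sq$ and $\alg A\not\models\sq$. It then uses Corollary~\ref{cor-CEPref} to shrink $\alg A$ to an SI subalgebra that still refutes $\sq$. Be aware, though, that this subalgebra is generated by the values of the variables of $\sq$ together with a monolith pair. It is therefore countable only when $\sq$ has countably many variables. For an uncountable witness as above, every countable SI subalgebra embeds into $\alg F_{\vv W}(\omega)$. By that same CEP argument, every quasiequation with countably many variables refuted in $\alg A$ is then also refuted in $\alg F_{\vv W}(\omega)$. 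So following the paper's route does not by itself close the gap. What is missing is a genuine argument that excludes uncountable SI algebras.
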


\begin{proof}
	Let $\vv V$ be a variety of finite type.
	
	From left to right. Suppose that $\vv V$ is strongly primitive and that $\alg A \in \vv V$ is a countable SI algebra. We prove the contrapositive: if $\alg A$ is not weakly $\vv V$-projective, then there exists a subquasivariety $\vv V' \subseteq \vv V$ that is not SSCpl.
	
	Assume that $\alg B \in \vv V$ is such that $\alg A \in \HH(\alg B)$ whereas $\alg A \notin \II\SU(\alg B)$. We show that $\VV(\alg B)$ is not SSCpl.
	
	Since $\alg A \notin \II\SU(\alg B)$, by Proposition~\ref{pr-char}, $\alg B \models \csq_\alg A$. Moreover, since $\alg B$ generates $\VV(\alg B)$, by Proposition~\ref{pr-adm}, $\csq_\alg A$ is admissible for $\VV(\alg B)$. However, $\alg A \in \HH(\alg B) \subseteq \VV(\alg B)$ and $\alg A \not\models \csq_\alg A$, which entails that $\VV(\alg B)$ is not SSCpl.
	
	From right to left. Suppose that all countable SI algebras in $\vv V$ are weakly $\vv V$-projective. Again, we prove the contrapositive: let $\vv V' \subseteq \vv V$ and suppose that $\vv V'$ is not SSCpl. Then $\PR(\alg F_{\vv V'}(\omega)) \subsetneq \vv V'$. Observe that $\vv V' \setminus \PR(\alg F_{\vv V'}(\omega))$ must contain an SI algebra; otherwise, since every algebra in $\vv V'$ is a subdirect product of its SI algebras, and prevarieties are closed under subdirect products, we would have $\vv V' = \PR(\alg F_{\vv V'}(\omega))$, contrary to our assumption.
	
	Suppose that $\alg A \in \vv V' \setminus \PR(\alg F_{\vv V'}(\omega))$ is an SI algebra. Then there is a $\kk$-quasiequation $\sq$ such that $\PR(\alg F_{\vv V'}(\omega)) \models \sq$ whereas $\alg A \not\models \sq$. By Corollary~\ref{cor-CEPref}, $\alg A$ contains a countable SI subalgebra $\alg B$ with $\alg B \not\models \sq$. Because $\alg B$ is countable, it is a homomorphic image of $\alg F_{\vv V'}(\omega)$, but $\alg B \not\models \sq$ and hence $\alg B \notin \II\SU(\alg F_{\vv V'}(\omega))$, that is, $\alg B$ is not weakly $\vv V$-projective.
\end{proof}

\section{Conclusion}

In conclusion, we formulate some open problems.

\begin{enumerate}
	\item Is it true that for every variety $\vv V$ of finite type generated by finite algebras with EDPC, the strong structural core can be defined relative to the structural core by $\WsV(\omega^+)$?
	
	\item Is it true that, by adding the bounding rule to the Visser rules, one obtains an axiomatization of the strongly structurally complete consequence relation for IPC?
	
	\item Is it true that the bounding rule yields an axiomatization of the strongly structurally complete consequence relation for $LC$ and $ML$?
\end{enumerate}

\bibliographystyle{eptcs}

\end{document}